\spnewtheorem{theorem}{Theorem}[section]{\bfseries}{\itshape}
\spnewtheorem{lemma}[theorem]{Lemma}{\bfseries}{\itshape}
\spnewtheorem{corollary}[theorem]{Corollary}{\bfseries}{\itshape}
\spnewtheorem{problem}[theorem]{Problem}{\bfseries}{\itshape}
\spnewtheorem{definition}[theorem]{Definition}{\bfseries}{}
\spnewtheorem{proposition}[theorem]{Proposition}{\bfseries}{\itshape}
\spnewtheorem{example}[theorem]{Example}{\bfseries}{}
\spnewtheorem{remark}[theorem]{Remark}{\bfseries}{\upshape}
\spnewtheorem{assumption}[theorem]{Assumption}{\bfseries}{\itshape}
\newcommand{\im}{\mathop{\rm Im}\nolimits}
\newcommand{\ran}{\mathop{\rm ran}\nolimits}
\newcommand{\re}{\mathop{\rm Re}\nolimits}
\renewcommand{\theenumi}{\rm{\@alph\c@enumi)}}
\renewcommand{\theenumii}{\rm{(\@roman\c@enumii)}}
\begin{document}
	
	\title{The Cayley transform of
		the generator of a polynomially stable $C_0$-semigroup
		\thanks{This work was supported by JSPS KAKENHI Grant Number JP20K14362.}
	}
	
	
	\author{Masashi Wakaiki
	}
	
	
	\institute{M. Wakaiki \at
		Graduate School of System Informatics, Kobe University, Nada, Kobe, Hyogo 657-8501, Japan \\
		Tel.: +8178-803-6232\\
		Fax: +8178-803-6392\\
		\email{wakaiki@ruby.kobe-u.ac.jp}           
	}
	
	\date{Received: date / Accepted: date}

	\maketitle
	
	\begin{abstract}
		In this paper, we study the decay rate of the Cayley transform
		of the generator of a polynomially stable $C_0$-semigroup.
		To estimate the decay rate of the Cayley transform,
		we develop an integral condition on resolvents
		for polynomial stability. Using this integral condition,
		we relate polynomial stability to Lyapunov equations.
		We also study robustness of polynomial stability
		for a certain class of structured perturbations.
		\keywords{Polynomial stability \and Cayley transform \and Lyapunov equation}
	\end{abstract}
	
	\section{Introduction}
	Consider a $C_0$-semigroup $(T(t))_{t\geq 0}$ on a Hilbert space with
	generator $A$, and suppose that $(T(t))_{t\geq 0}$ is polynomially 
	stable with parameter $\alpha>0$, that is,
	$(T(t))_{t\geq 0}$ is uniformly bounded, 
	the spectrum of $A$ is contained in the open left half-plane,
	and there exists $M>0$ such that for all $t>0$,
	\[
	\|T(t)A^{-1}\| \leq 
	\frac{M}{t^{1/\alpha}}.
	\]
	We consider the following question:
	Is polynomial decay of $(T(t))_{t\geq 0}$ passed to
	the Cayley transform $A_d := (I+A)(I-A)^{-1}$?
	The quantitive behavior of the operator norm $\|T(t)A^{-1}\|$
	has been extensively studied; see e.g., \cite{Liu2005PDR, 
		Batkai2006,Batty2008, Borichev2010,Batty2016,Rozendaal2019}. 
	A discrete analogue, the quantified 
	Katznelson-Tzafriri theorem, has also been investigated in \cite{
		Seifert2015, Seifert2016, Cohen2016, Ng2020}.
	However, to the author's knowledge, 
	it has not been well established
	whether and how 
	polynomial decay of $(T(t))_{t\geq 0}$ in the continuous setting
	yields the decay of 
	the corresponding Cayley transform in the discrete setting.
	The purpose of this paper is to show that 
	polynomial decay of a $C_0$-semigroup
	is preserved under the Cayley transformation in a certain sense.
	
	Applications of the Cayley transform of a semigroup 
	generator arise in numerical analysis \cite{Piskarev2007} and 
	system theory \cite[Section~12.3]{Staffans2005}.
	In the finite-dimensional case, a matrix
	and its 
	Cayley transform 
	share the same stability properties, but
	this does not hold in the infinite-dimensional case.
	In fact, in the Banach space setting,
	the Cayley transform of the generator of 
	even an exponentially stable $C_0$-semigroup
	may not be power bounded \cite[Lemma~2.1]{Gomilko2011}.
	For the case of Hilbert spaces,
	it is still unknown whether 
	the corresponding Cayley transform is power bounded for
	every generator of a uniformly bounded
	$C_0$-semigroup, as mentioned in Section~5.5 of \cite{Batty2021}.
	However, some sufficient conditions for Cayley transforms
	to be power bounded have been obtained; see, e.g., 
	\cite{Gomilko2004, Guo2006, Gomilko2007, Gomilko2011}. In particular,
	it is well known that $A$ generates a $C_0$-semigroup
	of contractions on a Hilbert space if and only if the corresponding
	Cayley transform is a contraction \cite[Theorem III.8.1]{Nagy1970}.
	We refer the reader to the survey \cite{Gomilko2017} for more details.
	
	In this paper, we prove that if $(T(t))_{t\geq 0}$ is a polynomially
	stable $C_0$-semigroup with parameter $\alpha>0$ 
	on a Hilbert space with generator $A$
	such that its Cayley transform $A_d$ is power bounded, 
	then there exists $M_d>0$ such that for all $n \in \mathbb{N}$,
	\[
	\|A_d^n A^{-1}\| \leq 
	M_d \left(\frac{\log n}{n} \right)^{\frac{1}{\alpha+2}}.
	\]
	We also show that 
	in some cases, such as when $A$ is normal,
	the logarithmic correction  can be omitted.
	Moreover, we give a simple example of 
	a normal operator $A$ for which the 
	decay rate $1/n^{1/(\alpha+2)}$ cannot be improved.
	
	To obtain the decay estimate of Cayley transforms,
	we extend the Lyapunov-based approach developed by
	Guo and Zwart \cite{Guo2006}.
	In \cite{Guo2006},
	uniform boundedness and strong stability of
	a $C_0$-semigroup have been characterized in terms of
	the solution of a certain Lyapunov equation.
	The integral conditions on resolvents 
	obtained in \cite{Gomilko1999,Shi2000} for uniform boundedness
	and in \cite{Tomilov2001} for strong stability play
	an important role in this Lyapunov-based approach.
	Therefore,
	we first obtain a similar integral condition 
	for polynomial stability.
	By means of this integral condition, we next relate
	polynomial stability to the Lyapunov equation used in \cite{Guo2006}.
	Finally, we estimate the decay rate of the Cayley transform, by
	using the solution of the Lyapunov equation.
	
	As another application of the  Lyapunov-based approach,
	we 
	consider the following robustness analysis of polynomial stability:
	If $A$ generates a polynomially stable semigroup, then
	does $A+rA^{-1}$ also generate a polynomially stable semigroup
	for every $r >0$?
	Robustness of polynomial stability has been studied in \cite{Paunonen2011,
		Paunonen2012SS,Paunonen2013SS,Rastogi2020}.
	In these previous studies, perturbations are not structured,
	but the norms of the perturbations are assumed to be 
	bounded in a certain sense.
	In contrast, the class of perturbations 
	we consider is limited to $\{rA^{-1}:r>0\}$, but
	we do not place any norm conditions for perturbations.
	We show that if $A$ generates a polynomially stable semigroup
	with parameter $\alpha>0$, then for every $r>0$,
	$A+rA^{-1}$ also generates a polynomially stable semigroup
	with the same parameter $\alpha$  in the case $\alpha>2$
	and with parameter $\alpha+\varepsilon$ for arbitrary small $\varepsilon>0$
	in the case $\alpha < 2$. If $\alpha = 2$, then a logarithmic factor appears
in the rate of decay.
	
	This paper is organized as follows.
	In Section~\ref{sec:Background}, we
	collect some preliminary results on polynomial stability.
	In Section~\ref{sec:Lyap}, we present an 
	integral condition on resolvents for polynomial stability
	and then connect this stability to Lyapunov equations.
	In Section~\ref{sec:Decay_CT}, we study the decay rate of the Cayley transform
	of the generator of a polynomially stable $C_0$-semigroup.
	Section~\ref{sec:robustness} contains the robustness analysis
	of polynomial stability for the class of
	perturbations $\{rA^{-1}:r>0 \}$.

	\paragraph{Notation}
	Let $\mathbb{C}_- := \{\lambda \in \mathbb{C}:\re \lambda < 0 \}$ and
	$i\mathbb{R} := \{i \eta : \eta \in \mathbb{R}\}$.
	The closure of a subset $\Omega$ of $\mathbb{C}$ is denoted by
	$\overline{\Omega}$.
	For real-valued functions $f,g$ on $J \subset \mathbb{R}$, we write
	$f(t) = O(g(t))$ as $t \to \infty$ if
	there exist $M>0$ and $t_0 \in J$ such that 
	$f(t) \leq Mg(t)$ for every $t \geq t_0$, and
	similarly,
	$f(t) = o(g(t))$ as $t \to \infty$ if
	for every $\varepsilon >0$, there exists $t_0 \in J$ such that 
	$f(t) \leq \varepsilon g(t)$ for every $t \geq t_0$.
	Let $X$ be a Banach space.
	For a linear operator $A$ on $X$, we denote by $D(A)$ and
	$\ran (A)$ the domain and the range of $A$, respectively.
	The space of bounded linear operators on $X$ is denoted by
	$\mathcal{L}(X)$.
	For a closed operator $A:D(A) \subset X \to X$,
	we denote by $\sigma(A)$ and $\varrho(A)$ the spectrum and
	the resolvent set of $A$, respectively.
	Let $\tilde \sigma(A)$ be the extended spectrum of $A$ defined by
	\[
	\tilde \sigma(A) :=
	\begin{cases}
	\sigma(A) & \text{if $A \in \mathcal{L}(X)$} \\
	\sigma(A) \cup \{\infty\} & \text{if $A \not\in \mathcal{L}(X)$}.
	\end{cases}
	\]
	For $\lambda \in \varrho(A)$, the resolvent operator is given by
	$R(\lambda,A) := (\lambda - A)^{-1}$.
	Let $H$ be a Hilbert space.
	The inner product of $H$ is denoted by $\langle\cdot, \cdot \rangle$.
	The Hilbert space adjoint for a linear operator 
	$A$ with dense domain in $H$
	is denoted by $A^*$.
	
	\section{Background on polynomially stable semigroups}
	\label{sec:Background}
	In this section, we review the definition and 
	some important properties of polynomially stable $C_0$-semigroups.
	
	\begin{definition}
		Let $\alpha >0$. A $C_0$-semigroup $(T(t))_{t\geq 0}$ 
		on a Banach space $X$
		generated by $A:D(A) \subset X \to X$ is {\em polynomially stable with
			parameter
			$\alpha$} if $(T(t))_{t\geq 0}$ is uniformly bounded, 
		$i \mathbb{R} \subset \varrho(A)$, and 
		\begin{equation}
		\label{eq:poly_decay}
		\|T(t)A^{-1}\| = 
		O\left( \frac{1}{t^{1/\alpha}}
		\right)\qquad t\to \infty.
		\end{equation}
	\end{definition}

	The spectrum of 
	the generator of any uniformly bounded semigroup
	is contained in the closed left half-plane $\overline{\mathbb{C}_-}$.
	Therefore, if $A$ generates a polynomially stable semigroup, then
	$\sigma(A) \subset \mathbb{C}_-$.

	Polynomial decay \eqref{eq:poly_decay} 
	of a $C_0$-semigroup $(T(t))_{t\geq 0}$ on a Hilbert space
	can be characterized by orbits as well; see \cite[Theorem~2.4]{Borichev2010}.
	\begin{theorem}
		\label{thm:decay_charac}
		Let $(T(t))_{t\geq 0}$ be a uniformly bounded $C_0$-semigroup
		on a Hilbert space $H$ with generator 
		$A$ such that $i \mathbb{R} \subset \varrho(A)$. 
		For a fixed 
		$\alpha >0$, \eqref{eq:poly_decay} holds
		if and only if
		\begin{equation}
		\label{eq:poly_decay_orbits}
		\|T(t)A^{-1} x\| = o\left(\dfrac{1}{t^{1/\alpha}}\right)\qquad t\to \infty
		\end{equation}
		holds for every $x \in H$.
	\end{theorem}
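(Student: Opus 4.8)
The statement is an equivalence, and I would treat the two directions separately, noting that $0\in i\mathbb{R}\subset\varrho(A)$, so $A^{-1}\in\mathcal{L}(H)$ and commutes with every $T(t)$.

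\emph{Direction \eqref{eq:poly_decay_orbits} $\Rightarrow$ \eqref{eq:poly_decay}.} This is the soft direction and I would obtain it from the uniform boundedness principle. For each $x\in H$ the scalar function $t\mapsto t^{1/\alpha}\|T(t)A^{-1}x\|$ is continuous on $[1,\infty)$ (by strong continuity of the semigroup) and tends to $0$ by \eqref{eq:poly_decay_orbits}, hence is bounded, say by $C_x$. Applying the uniform boundedness principle to the family $\{t^{1/\alpha}T(t)A^{-1}:t\ge 1\}\subset\mathcal{L}(H)$ gives $\sup_{t\ge 1}t^{1/\alpha}\|T(t)A^{-1}\|<\infty$, which is \eqref{eq:poly_decay}. (This part does not use the Hilbert space structure.)

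\emph{Direction \eqref{eq:poly_decay} $\Rightarrow$ \eqref{eq:poly_decay_orbits}.} Fix $M>0$ and $t_0>0$ with $\|T(t)A^{-1}\|\le Mt^{-1/\alpha}$ for $t\ge t_0$. The plan is to first prove the \emph{strictly faster} decay $\|T(t)A^{-1}x\|=O(t^{-2/\alpha})$ as $t\to\infty$ for $x$ in the dense subspace $D(A)$, and then pass to general $x$ by a density argument. For the faster decay on $D(A)$: if $x\in D(A)$ then $A^{-1}x\in D(A^2)$ with $A^2(A^{-1}x)=Ax$, so $A^{-1}x=A^{-2}(Ax)$ and
\[
\|T(t)A^{-1}x\|=\|T(t)A^{-2}(Ax)\|\le\|T(t)A^{-2}\|\,\|Ax\|.
\]
Writing $T(t)A^{-2}=\bigl(T(t/2)A^{-1}\bigr)\bigl(T(t/2)A^{-1}\bigr)$ and using that $A^{-1}$ commutes with the semigroup yields $\|T(t)A^{-2}\|\le\|T(t/2)A^{-1}\|^{2}\le M^{2}(t/2)^{-2/\alpha}$ for $t\ge 2t_0$, giving the claimed $O(t^{-2/\alpha})$ bound on $D(A)$. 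Then, for general $x\in H$ and $\varepsilon>0$, pick $y\in D(A)$ with $\|x-y\|<\varepsilon$; for $t\ge t_0$,
\[
t^{1/\alpha}\|T(t)A^{-1}x\|\le t^{1/\alpha}\|T(t)A^{-1}y\|+M\|x-y\|\le t^{1/\alpha}\|T(t)A^{-1}y\|+M\varepsilon,
\]
and since $2/\alpha>1/\alpha$ the first term tends to $0$, so $\limsup_{t\to\infty}t^{1/\alpha}\|T(t)A^{-1}x\|\le M\varepsilon$. As $\varepsilon>0$ is arbitrary, this gives \eqref{eq:poly_decay_orbits}.

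I do not expect a genuine obstacle here, but if one step is the crux it is the observation that on $\ran(A^{-1})$ the orbit $T(t)A^{-1}x$ is controlled by $T(t)A^{-2}$, which decays at twice the rate simply because $A^{-2}$ splits as a product of two copies of $A^{-1}$ along the semigroup; density of $D(A)$ together with the uniform bound from \eqref{eq:poly_decay} then upgrades $O$ to $o$. This is essentially the argument of Borichev and Tomilov.
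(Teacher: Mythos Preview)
Your argument is correct. The paper does not supply its own proof of this theorem; it simply cites \cite[Theorem~2.4]{Borichev2010}, and the argument you give (squaring $T(t/2)A^{-1}$ to obtain $O(t^{-2/\alpha})$ on $D(A)$, then upgrading $O$ to $o$ by density plus the uniform bound, with the converse via uniform boundedness) is precisely the standard one used there. As you implicitly note, neither direction of your proof uses the Hilbert space structure; in Borichev--Tomilov the Hilbert hypothesis is needed for the equivalence with the resolvent growth condition, not for the $O\Leftrightarrow o$ equivalence you prove here.
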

	
	For
	the generator $A$ of a polynomially stable $C_0$-semigroup,
	$-A$ is sectorial in the sense of \cite[Chapter~2]{Haase2006}, and hence
	the fractional powers $(-A)^{\alpha}$ are well defined for all $\alpha \in \mathbb{R}$.
	Using the moment
	inequality (see, e.g., Proposition~6.6.4 of \cite{Haase2006}), we can normalize
	the decay rate
	in \eqref{eq:poly_decay}. See 
	\cite[Proposition~3.1]{Batkai2006} for the proof.
	\begin{lemma}
		\label{lem:frac_normalize1}
		Let 
		$\alpha >0$ and $(T(t))_{t\geq 0}$ be a uniformly bounded $C_0$-semigroup
		on a Banach space with generator 
		$A$ such that $0 \in \varrho(A)$.  Then
		\[
		\|T(t)(-A)^{-\alpha}\| = O\left(\frac{1}{t} \right)\qquad t\to \infty
		\]
		if and only if
		\[
		\|T(t)(-A)^{-\alpha \gamma }\| = O\left(\frac{1}{t^\gamma} \right)\qquad t\to \infty
		\]
		for some/all $\gamma>0$.
	\end{lemma}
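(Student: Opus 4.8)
The plan is to reduce the whole lemma to the following self-contained claim: if $(T(t))_{t\ge 0}$ is a uniformly bounded $C_0$-semigroup with generator $A$ and $0\in\varrho(A)$, and if $\|T(t)(-A)^{-\beta}\| = O(t^{-\mu})$ as $t\to\infty$ for some $\beta,\mu>0$, then $\|T(t)(-A)^{-\delta}\| = O(t^{-\delta\mu/\beta})$ as $t\to\infty$ for every $\delta>0$. Granting this, the forward implication of the lemma follows by taking $(\beta,\mu)=(\alpha,1)$ and letting $\delta=\alpha\gamma$ range over all positive values (which gives exponent $\delta\mu/\beta=\gamma$), while the converse — assuming the decay holds for one particular $\gamma=\gamma_0$ — follows by taking $(\beta,\mu)=(\alpha\gamma_0,\gamma_0)$ and $\delta=\alpha$ (exponent $1$); thus the ``some/all'' formulation is handled at once. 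Throughout I would use that, $(T(t))_{t\ge0}$ being uniformly bounded and $0\in\varrho(A)$, the operator $-A$ is invertible sectorial; hence the powers $(-A)^s$ are well defined for all real $s$ and bounded for $s\le 0$, $T(t)$ commutes with every $(-A)^s$, $\ran((-A)^{-\nu})=D((-A)^\nu)$, and the moment inequality
\[
\|(-A)^{\theta\nu}x\| \le C_{\theta,\nu}\,\|x\|^{1-\theta}\,\|(-A)^{\nu}x\|^{\theta},\qquad 0<\theta<1,\ \nu>0,\ x\in D((-A)^\nu),
\]
holds; see \cite[Chapter~2]{Haase2006} and \cite[Proposition~6.6.4]{Haase2006}.

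I would prove the claim in two steps. \emph{Step 1 (integer multiples).} For a positive integer $n$, the semigroup law together with the commutation of $T(t)$ with the bounded operator $(-A)^{-\beta}$ gives $T(t)(-A)^{-n\beta} = \big(T(t/n)(-A)^{-\beta}\big)^{n}$, and therefore $\|T(t)(-A)^{-n\beta}\| \le \|T(t/n)(-A)^{-\beta}\|^{n} = O(t^{-n\mu})$ as $t\to\infty$. \emph{Step 2 (interpolating down).} Given $\delta>0$, fix an integer $n$ with $n\beta\ge\delta$. For an arbitrary vector $y$, set $w := T(t)(-A)^{-n\beta}y \in D((-A)^{n\beta})$; then $(-A)^{n\beta}w = T(t)y$, so $\|(-A)^{n\beta}w\| \le M\|y\|$ with $M:=\sup_{t\ge0}\|T(t)\|$, while $\|w\| = O(t^{-n\mu})\|y\|$ by Step 1, and $(-A)^{n\beta-\delta}w = T(t)(-A)^{-\delta}y$. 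If $\delta=n\beta$ we are already done; otherwise the moment inequality with $\nu=n\beta$ and $\theta=1-\delta/(n\beta)\in(0,1)$ yields
\[
\|T(t)(-A)^{-\delta}y\| = \|(-A)^{n\beta-\delta}w\| \le C\,\|w\|^{\delta/(n\beta)}\,\|(-A)^{n\beta}w\|^{1-\delta/(n\beta)} = O\big(t^{-n\mu\cdot\delta/(n\beta)}\big)\|y\| = O\big(t^{-\delta\mu/\beta}\big)\|y\|,
\]
and taking the supremum over $\|y\|\le 1$ proves the claim.

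The computation itself is routine, so the only point requiring care — rather than a genuine obstacle — is the bookkeeping with fractional powers: one must justify that $w$ lies in $D((-A)^{n\beta})$ (it does, being in $\ran((-A)^{-n\beta})$), that $(-A)^{n\beta-\delta}(-A)^{-n\beta}=(-A)^{-\delta}$ on the whole space, and that $T(t)$ passes through each power, all of which are standard facts from the holomorphic functional calculus for the invertible sectorial operator $-A$. With these in hand the exponent $\delta\mu/\beta$ specializes to $\gamma$, which is exactly the statement of the lemma.
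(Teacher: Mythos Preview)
Your proposal is correct and follows essentially the same route as the paper: the paper refers to \cite[Proposition~3.1]{Batkai2006} for Lemma~\ref{lem:frac_normalize1} and then, in proving the orbit analogue Lemma~\ref{lem:frac_normalize2}, spells out precisely this argument (iterate via $T(t)(-A)^{-k\beta}=\big(T(t/k)(-A)^{-\beta}\big)^k$ to reach integer multiples, then interpolate back with the moment inequality). Your formulation as a single claim with parameters $(\beta,\mu,\delta)$ is a clean way to handle both directions of the ``some/all'' equivalence at once.
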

	
	A similar normalization result holds also for the 
	case of orbits \eqref{eq:poly_decay_orbits}.
	The proof is essentially same as that of Lemma~\ref{lem:frac_normalize1}, i.e.,
	it is a
	consequence of the moment inequality as stated in the proof of
	Theorem~2.4 of \cite{Borichev2010}.
	However, to make our presentation self-contained,
	we give a short argument.
	\begin{lemma}
		\label{lem:frac_normalize2}
		Let 
		$\alpha >0$ and $(T(t))_{t\geq 0}$ be a uniformly bounded $C_0$-semigroup
		on a Banach space $X$ with generator 
		$A$ such that $0 \in \varrho(A)$. 
		Then
		\begin{equation}
		\label{eq:alpha_trajectory}
		\|T(t)(-A)^{-\alpha}x\| = o\left(\frac{1}{t} \right) \qquad t \to \infty
		\end{equation}
		for all $x \in X$ 
		if and only if
		\begin{equation}
		\label{eq:alphagamma_trajectory}
		\|T(t)(-A)^{-\alpha \gamma } x\| = o\left(\frac{1}{t^\gamma} \right)\qquad t\to \infty
		\end{equation}
		for all $x \in X$ and some/all $\gamma>0$.
	\end{lemma}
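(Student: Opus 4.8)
The plan is to reprise the proof of Lemma~\ref{lem:frac_normalize1}, i.e.\ to combine the moment inequality for the fractional powers of $-A$ with the semigroup law and the uniform boundedness principle. Put $B := -A$ and let $M$ satisfy $\|T(t)\| \le M$ for all $t \ge 0$. Since $(T(t))_{t \ge 0}$ is uniformly bounded and $0 \in \varrho(A)$, the operator $B$ is sectorial with $0 \in \varrho(B)$, so for every $s \ge 0$ we have $B^{-s} \in \mathcal{L}(X)$, $B^{-s}$ commutes with each $T(t)$, and $\ran(B^{-s}) \subset D(B^{s})$; moreover, by the moment inequality there is a constant $C$ such that $\|B^{\beta} w\| \le C \|w\|^{1 - \beta/\delta} \|B^{\delta} w\|^{\beta/\delta}$ whenever $0 \le \beta \le \delta$ and $w \in D(B^{\delta})$. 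Since \eqref{eq:alpha_trajectory} is \eqref{eq:alphagamma_trajectory} with $\gamma = 1$, it is enough to prove: (i) \eqref{eq:alpha_trajectory} implies \eqref{eq:alphagamma_trajectory} for every $\gamma > 0$; and (ii) \eqref{eq:alphagamma_trajectory} for a single $\gamma_{0} > 0$ implies \eqref{eq:alpha_trajectory}.

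The core of both implications is one interpolation estimate. Assume \eqref{eq:alpha_trajectory}, fix $x \in X$, and set $w(t) := T(t)B^{-\alpha}x = B^{-\alpha}T(t)x$, which lies in $D(B^{\alpha})$. Then $\|w(t)\| = o(1/t)$ by hypothesis, while $\|B^{\alpha} w(t)\| = \|T(t)x\| \le M\|x\|$, so the moment inequality with $\delta = \alpha$ and $0 \le \beta \le \alpha$ yields $\|T(t)B^{-(\alpha - \beta)}x\| = \|B^{\beta} w(t)\| = o(t^{\beta/\alpha - 1})$; writing $\beta = \alpha(1 - \gamma)$ and letting $\gamma$ range over $(0,1]$ gives \eqref{eq:alphagamma_trajectory} for all $\gamma \in (0,1]$. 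The identical computation with $w(t) := T(t)B^{-\alpha\gamma_{0}}x$ for a fixed $\gamma_{0} \ge 1$ (so $\|w(t)\| = o(1/t^{\gamma_{0}})$ and $\|B^{\alpha\gamma_{0}} w(t)\| \le M\|x\|$) and $\beta = \alpha(\gamma_{0} - 1)$ produces \eqref{eq:alpha_trajectory}. This settles (ii) for $\gamma_{0} \ge 1$ and (i) for $\gamma \le 1$. It is worth noting that ``$o$'' rather than merely ``$O$'' is preserved here, because the bounded factor is $\|B^{\alpha} w(t)\|$ and the full decay is carried by the power of $\|w(t)\|$.

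To cover the remaining ranges --- $\gamma > 1$ in (i) and $\gamma_{0} < 1$ in (ii) --- I would bootstrap through integer multiples. The uniform boundedness principle, applied to the family $\{ t\, T(t)B^{-\alpha} : t \ge 1 \} \subset \mathcal{L}(X)$, whose orbits are bounded by \eqref{eq:alpha_trajectory} together with strong continuity, gives $\|T(t)B^{-\alpha}\|_{\mathcal{L}(X)} = O(1/t)$. Then the factorisation $T(t)B^{-n\alpha}x = T(t/2)B^{-\alpha}\bigl( T(t/2)B^{-(n-1)\alpha}x \bigr)$, valid by $T(t) = T(t/2)T(t/2)$, $B^{-n\alpha} = (B^{-\alpha})^{n}$ and commutativity, together with an induction on $n$ using the operator bound just established, shows $\|T(t)B^{-n\alpha}x\| = o(1/t^{n})$ for every $n \in \mathbb{N}$ and every $x \in X$; applying the interpolation estimate of the previous paragraph with $B^{-n\alpha}$ in place of $B^{-\alpha}$ (and $n \ge \gamma$) then yields \eqref{eq:alphagamma_trajectory} for all $\gamma \le n$, proving (i). For (ii) with $\gamma_{0} < 1$, the same uniform boundedness argument gives $\|T(t)B^{-\alpha\gamma_{0}}\|_{\mathcal{L}(X)} = O(1/t^{\gamma_{0}})$, the bootstrap gives $\|T(t)B^{-k\alpha\gamma_{0}}x\| = o(1/t^{k\gamma_{0}})$ for all $k \in \mathbb{N}$, and choosing $k$ with $k\gamma_{0} \ge 1$ reduces to the case already handled. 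I expect no genuinely hard step here; the only points requiring care are the exponent bookkeeping, the preservation of ``$o$'' just mentioned, and the verification at each stage that the relevant vector lies in the domain of the fractional power being applied --- automatic from $\ran(B^{-s}) \subset D(B^{s})$. This is simply the orbit-wise counterpart of the argument used in the proof of \cite[Theorem~2.4]{Borichev2010}.
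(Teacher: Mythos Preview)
Your proof is correct and follows essentially the same approach as the paper: both combine the moment inequality for fractional powers with the semigroup law and the uniform boundedness principle to bootstrap from one decay exponent to another. The only difference is organizational --- you first dispose of the ranges $\gamma\in(0,1]$ and $\gamma_0\ge 1$ by a direct moment-inequality interpolation (which the paper isolates separately as Lemma~\ref{lem:frac_normalize3}) and then bootstrap via $T(t/2)$ recursively, whereas the paper splits $T(t)=T(t/k)^k$ in one step and applies the moment inequality afterwards; the content is the same.
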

	\begin{proof}
		Suppose that \eqref{eq:alphagamma_trajectory} holds
		for all $x \in X$ and some $\gamma >0$. Define $\delta := \alpha \gamma$. 
		Since
		\[
		\sup_{t \geq 0} \|t^\gamma T(t) (-A)^{-\delta}x\| < \infty
		\]
		for every $x \in X$, it follows from
		the uniform boundedness principle that there exists $C>0$
		such that 
		\[
		\sup_{t \geq 0} \|t^\gamma T(t) (-A)^{-\delta}\|  \leq C.
		\]
		
		Take $x \in X$,
		$\varepsilon >0$, and $k \in \mathbb{N}$.
		Let 
		\[
		0 < \varepsilon_0 < \frac{\varepsilon}{k^{k\gamma} C^{k-1}}.
		\] 
		By \eqref{eq:alphagamma_trajectory},
		there exists $t_0>0$ such that 
		\[
		\|T(t)(-A)^{-\delta} x\| \leq \frac{\varepsilon_0}{t^\gamma }
		\qquad \forall t \geq t_0.
		\]
		For every $t \geq kt_0$,
		\begin{align*}
		\|T(t) (-A)^{-k\delta}x\| &\leq 
		\|T(t/k) (-A)^{-\delta}\|^{k-1}~\! \|T(t/k) (-A)^{-\delta}x\| \\
		&\leq 
		\frac{C^{k-1}}{(t/k)^{(k-1)\gamma }} \cdot \frac{\varepsilon_0}{(t/k)^\gamma} \\
		&< \frac{\varepsilon}{t^{k\gamma}}.
		\end{align*}
		This implies that 
		for every $k \in \mathbb{N}$,
		\begin{equation}
		\label{eq:kd_est}
		\|T(t) (-A)^{-k\delta}x\| = o \left(
		\frac{1}{t^{k\gamma}}
		\right)\qquad t\to \infty.
		\end{equation}
		By the moment inequality, for 
		every $k \in \mathbb{N}$ and
		every $\vartheta \in (0,1)$, there exists a constant $L_1>0$ such that 
		\begin{align}
		\|T(t)(-A)^{-k\delta \vartheta} x\| &=
		\|
		(-A)^{k\delta (1-\vartheta)} T(t)(-A)^{-k\delta} x
		\| \notag \\
		&\leq 
		L_1\|(-A)^{k\delta } 
		T(t)(-A)^{-k\delta } x\|^{1-\vartheta} ~\!
		\|
		T(t)(-A)^{-k\delta } x\|^{\vartheta} \notag \\
		&\leq 
		L_1M^{1-\vartheta} \|T(t)(-A)^{-k\delta } x\|^{\vartheta}
		\label{eq:kdtheta_est}
		\end{align}
		for all $t \geq 0$,
		where $M := \sup_{t \geq 0} \|T(t)x\|$. 
		This and \eqref{eq:kd_est} yield
		\[
		\|T(t)(-A)^{-k\delta \vartheta} x\|  = o \left(
		\frac{1}{t^{k\gamma \vartheta}}
		\right)\qquad t\to \infty.
		\]
		Setting $\vartheta = 1/(k\gamma)$ with $k > 1/\gamma$,
		we obtain \eqref{eq:alpha_trajectory}.
		
		Suppose that \eqref{eq:alpha_trajectory} holds for all $x \in X$.
		Take $\tilde \gamma>0$ and $x \in X$.
		Substituting $\gamma=1$ into \eqref{eq:kd_est}, 
		we have that 
		for every $k \in \mathbb{N}$,
		\[
		\|T(t)(-A)^{-k\alpha} x\| = o \left(\frac{1}{t^k} \right)\qquad t\to \infty.
		\]
		As in \eqref{eq:kdtheta_est}, we see that 
		for every $k \in \mathbb{N}$
		and every $\vartheta \in (0,1)$, 
		there exists $L_2 >0$ such that 
		\[
		\|T(t)(-A)^{-k\alpha \vartheta} x\| \leq L_2 M^{1-\vartheta} 
		\|T(t)(-A)^{-k\alpha}x  \|^{\vartheta}\qquad \forall t \geq 0,
		\]
		where $M := \sup_{t \geq 0} \|T(t)x\|$.
		Setting $\vartheta = \tilde \gamma /k$ with $k > \tilde \gamma$,
		we obtain \eqref{eq:alphagamma_trajectory} with $\gamma = \tilde
		\gamma$.
		\hfill $\Box$	
	\end{proof}
	
	In Lemma~\ref{lem:frac_normalize2}, 
	we consider the global conditions on the decay of
	all orbits $\{(T(t)x)_{t\geq 0} :x \in X\}$.
	For individual orbits, a partial result holds. Since it can be obtained 
	from the moment inequality as in \eqref{eq:kdtheta_est},
	we omit the proof.
	\begin{lemma}
		\label{lem:frac_normalize3}
		Let $\alpha,\beta >0$ and
		$(T(t))_{t\geq 0}$ be a uniformly bounded $C_0$-semigroup
		on a Banach space $X$ with generator 
		$A$ such that $0 \in \varrho(A)$. 
		If $x \in X$ satisfies 
		\[
		\|T(t)(-A)^{-\alpha}x\| = o\left(\frac{1}{t^\beta} \right)\qquad t\to \infty,
		\]
		then 
		\[
		\|T(t)(-A)^{-\alpha \gamma }x\| = o\left(\frac{1}{t^{\beta \gamma}} \right)\qquad t\to \infty
		\]
		holds for all $\gamma \in (0,1)$.
	\end{lemma}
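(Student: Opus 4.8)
The plan is to deduce this directly from the moment inequality, in exactly the manner of the computation \eqref{eq:kdtheta_est} in the proof of Lemma~\ref{lem:frac_normalize2}; no semigroup splitting or uniform boundedness principle is needed, which is precisely why the statement can be kept at the level of individual orbits. Fix $x \in X$ with $\|T(t)(-A)^{-\alpha}x\| = o(t^{-\beta})$ and fix $\gamma \in (0,1)$. The key algebraic observation is that
\[
T(t)(-A)^{-\alpha\gamma}x = (-A)^{\alpha(1-\gamma)}\,y, \qquad y := T(t)(-A)^{-\alpha}x = (-A)^{-\alpha}T(t)x,
\]
and that $y$ lies in $D((-A)^{\alpha})$ with $(-A)^{\alpha}y = T(t)x$, since $T(t)$ commutes with the fractional powers of $-A$.

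Next I would apply the moment inequality (Proposition~6.6.4 of \cite{Haase2006}) to interpolate the fractional power $(-A)^{\alpha(1-\gamma)}$ between $(-A)^{0}$ and $(-A)^{\alpha}$ with exponent $1-\gamma \in (0,1)$: there is a constant $L>0$, independent of $t$, such that
\[
\|(-A)^{\alpha(1-\gamma)}y\| \le L\,\|(-A)^{\alpha}y\|^{1-\gamma}\,\|y\|^{\gamma}.
\]
Substituting $(-A)^{\alpha}y = T(t)x$ and $y = T(t)(-A)^{-\alpha}x$, and bounding $\|T(t)x\| \le M := \sup_{s\ge 0}\|T(s)x\| < \infty$ by uniform boundedness, yields
\[
\|T(t)(-A)^{-\alpha\gamma}x\| \le L\,M^{1-\gamma}\,\|T(t)(-A)^{-\alpha}x\|^{\gamma}.
\]
Since $\|T(t)(-A)^{-\alpha}x\| = o(t^{-\beta})$ forces $\|T(t)(-A)^{-\alpha}x\|^{\gamma} = o(t^{-\beta\gamma})$, the desired estimate $\|T(t)(-A)^{-\alpha\gamma}x\| = o(t^{-\beta\gamma})$ follows.

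The only point requiring attention — and it is not a real obstacle — is the legitimacy of the fractional power calculus and of the moment inequality under the present hypotheses (uniform boundedness of $(T(t))_{t\ge 0}$ together with $0 \in \varrho(A)$, rather than polynomial stability). Here one uses that $A$ generating a bounded $C_0$-semigroup gives the resolvent bound $\|\lambda R(\lambda,-A)\| = O(1)$ on every sector about the negative real axis of half-angle $<\pi/2$, so $-A$ is sectorial in the sense of \cite[Chapter~2]{Haase2006}; since moreover $0 \in \varrho(-A)$, the negative powers $(-A)^{-s}$ with $s>0$ are bounded and Proposition~6.6.4 of \cite{Haase2006} applies as above. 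The restriction to $\gamma \in (0,1)$ is exactly what makes $1-\gamma$ an admissible interpolation exponent, so—unlike the global statement of Lemma~\ref{lem:frac_normalize2}, which additionally exploits the semigroup property and the uniform boundedness principle—this orbitwise argument does not extend to $\gamma \ge 1$.
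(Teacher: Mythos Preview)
Your proof is correct and is precisely the argument the paper has in mind: the paper omits the proof, pointing to the moment-inequality computation \eqref{eq:kdtheta_est}, and your interpolation $\|T(t)(-A)^{-\alpha\gamma}x\| \le L\,M^{1-\gamma}\,\|T(t)(-A)^{-\alpha}x\|^{\gamma}$ is exactly that computation with $k\delta = \alpha$ and $\vartheta = \gamma$. Your justification of sectoriality of $-A$ and the applicability of the moment inequality under the stated hypotheses is also correct.
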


	\section{Polynomial stability and Lyapunov equation}
	\label{sec:Lyap}
	In this section, we connect polynomial stability to
	a certain Lyapunov equation. To this end,
	we first develop an integral condition on resolvents 
	for polynomial stability. 
	\begin{proposition}
		\label{prop:decay_to_integral}
		Let $(T(t))_{t\geq 0}$ be a uniformly bounded $C_0$-semigroup
		on a Hilbert space $H$ with generator 
		$A$ such that $i \mathbb{R} \subset \varrho(A)$.
		The following three assertions hold for a fixed $\alpha >0$:
		\begin{enumerate}
			\item  \label{enu:polystable1}
			$\|T(t)(-A)^{-\alpha}\| = O(1/t)$ as $t \to \infty$ if and only if	 
			\begin{align}
			\lim_{\xi \to 0+} 
			\xi^{1-2\gamma}
			\int_{-\infty}^{\infty}
			\|R(\xi+i\eta,A)(-A)^{-\alpha \gamma }x\|^2 d\eta = 0
			\end{align}
			for all $x \in H$ and
			some/all $\gamma \in (0,1/2)$.
			\item  \label{enu:polystable2}
			If $\|T(t)(-A)^{-\alpha}\| = O(1/t)$ as $t \to \infty$, then
			\begin{equation}
			\label{eq:resol_integral_cond}
			\lim_{\xi \to 0+} 
			\frac{1}{\log (1/\xi)}
			\int_{-\infty}^{\infty}
			\|R(\xi+i\eta,A)(-A)^{-\alpha/2}x\|^2 d\eta = 0
			\end{equation}
			for all $x \in H$.
			\item \label{enu:polystable3}
			If \eqref{eq:resol_integral_cond} holds for all $x \in H$,
			then
			\begin{equation}
			\label{eq:TA_o}
			\|T(t)(-A)^{-\alpha}\|  = O \left(
			\frac{\log t}{t} 
			\right)\qquad t\to \infty.
			\end{equation}
		\end{enumerate}
	\end{proposition}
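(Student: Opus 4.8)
All three assertions rest on one identity. Since $(T(t))_{t\ge0}$ is uniformly bounded and $i\mathbb R\subset\varrho(A)$, for each $\xi>0$ and $y\in H$ the resolvent is the Laplace transform $R(\xi+i\eta,A)y=\int_0^\infty e^{-(\xi+i\eta)t}T(t)y\,dt$, so $\eta\mapsto R(\xi+i\eta,A)y$ is the Fourier transform of the $L^2(\mathbb R;H)$-function obtained by extending $t\mapsto e^{-\xi t}T(t)y$ by $0$ to $t<0$. Plancherel's theorem then gives
\[
\int_{-\infty}^{\infty}\|R(\xi+i\eta,A)y\|^2\,d\eta
=2\pi\int_0^\infty e^{-2\xi t}\|T(t)y\|^2\,dt ,\qquad \xi>0,\ y\in H .
\]
The plan is to apply this with $y=(-A)^{-\alpha\gamma}x$ in \ref{enu:polystable1} and $y=(-A)^{-\alpha/2}x$ in \ref{enu:polystable2}--\ref{enu:polystable3} (these fractional powers are bounded and commute with $T(t)$), thereby turning each resolvent integral into a weighted Laplace transform of the bounded nonnegative function $t\mapsto\|T(t)(-A)^{-\alpha\gamma}x\|^2$, respectively $t\mapsto\|T(t)(-A)^{-\alpha/2}x\|^2$.

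For the forward direction of \ref{enu:polystable1} and for \ref{enu:polystable2}, I would first note that by Lemma~\ref{lem:frac_normalize1}, Lemma~\ref{lem:frac_normalize2} and Theorem~\ref{thm:decay_charac}, the hypothesis $\|T(t)(-A)^{-\alpha}\|=O(1/t)$ is equivalent to $\|T(t)(-A)^{-\alpha\gamma}x\|=o(1/t^{\gamma})$ for every $x\in H$ and every $\gamma>0$, so the relevant integrands are $o(t^{-2\gamma})$, resp.\ $o(1/t)$. It then suffices to prove two elementary Tauberian estimates: if $f\ge0$ is bounded with $f(t)=o(t^{-2\gamma})$ and $0<\gamma<1/2$ then $\xi^{1-2\gamma}\int_0^\infty e^{-2\xi t}f(t)\,dt\to0$ as $\xi\to0+$; and if $f\ge0$ is bounded with $f(t)=o(1/t)$ then $(\log(1/\xi))^{-1}\int_0^\infty e^{-2\xi t}f(t)\,dt\to0$ as $\xi\to0+$. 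Both are proved by splitting at a cutoff $T$: the part over $[0,T]$ is $O(\xi^{1-2\gamma})$, resp.\ $O(1)$, and the part over $[T,\infty)$ is controlled using $\int_0^\infty e^{-2\xi t}t^{-2\gamma}\,dt=(2\xi)^{2\gamma-1}\Gamma(1-2\gamma)$, resp.\ using $\int_T^{1/\xi}t^{-1}\,dt=\log(1/(\xi T))$ and $\int_{1/\xi}^\infty e^{-2\xi t}t^{-1}\,dt\le e^{-2}/2$. Dividing by the weight, letting $\xi\to0+$ and then letting the tolerance tend to $0$ yields the claimed limits via the Plancherel identity, and for \ref{enu:polystable1} the equivalence above delivers the conclusion for every $\gamma\in(0,1/2)$ at once.

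For the converse direction of \ref{enu:polystable1} and for \ref{enu:polystable3}, the key observation is the almost-monotonicity $\|T(t)y\|\le C_0\|T(s)y\|$ for $0\le s\le t$, where $C_0:=\sup_{t\ge0}\|T(t)\|$. Squaring, integrating over $s\in[t/2,t]$ and using $e^{-2s/t}\ge e^{-2}$ there gives $t\,\|T(t)y\|^2\le 2C_0^2e^2\int_0^\infty e^{-2s/t}\|T(s)y\|^2\,ds$. Combining this with the Plancherel identity at $\xi=1/t$ turns the hypothesis of \ref{enu:polystable1} into $\|T(t)(-A)^{-\alpha\gamma}x\|=o(t^{-\gamma})$ for all $x\in H$, whence $\|T(t)(-A)^{-\alpha}\|=O(1/t)$ by the equivalence above (and then the forward direction promotes "some $\gamma$" to "all $\gamma\in(0,1/2)$"). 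Likewise it turns \eqref{eq:resol_integral_cond} into $\|T(t)(-A)^{-\alpha/2}x\|^2=o((\log t)/t)$ for all $x\in H$; applying the uniform boundedness principle to the operators $\sqrt{t/\log t}\,T(t)(-A)^{-\alpha/2}$, $t\ge2$, upgrades this to $\|T(t)(-A)^{-\alpha/2}\|=O(\sqrt{(\log t)/t})$, and the factorization $T(t)(-A)^{-\alpha}=\big(T(t/2)(-A)^{-\alpha/2}\big)^2$ then gives \eqref{eq:TA_o}.

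I expect the main obstacle to be precisely this converse step: one has to recover a pointwise decay rate — and, for \ref{enu:polystable3}, a uniform one via Banach--Steinhaus — from an averaged resolvent estimate without degrading the rate by more than the stated logarithm, and it is the combination of the almost-monotonicity of $t\mapsto\|T(t)y\|$ with the exact asymptotics of the weighted Laplace transforms that keeps the bookkeeping tight. A secondary point that must be handled with care is the chain of normalizations through Lemmas~\ref{lem:frac_normalize1}--\ref{lem:frac_normalize2} and Theorem~\ref{thm:decay_charac}, which is the only ingredient besides Plancherel that genuinely uses the Hilbert-space setting.
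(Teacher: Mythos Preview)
Your argument is correct. The forward implications in \ref{enu:polystable1} and \ref{enu:polystable2} are handled exactly as in the paper: Plancherel converts the resolvent integral to $2\pi\int_0^\infty e^{-2\xi t}\|T(t)(-A)^{-\alpha\gamma}x\|^2\,dt$, and the orbit decay $o(t^{-2\gamma})$ (resp.\ $o(1/t)$) obtained via Theorem~\ref{thm:decay_charac} and Lemmas~\ref{lem:frac_normalize1}--\ref{lem:frac_normalize2} yields the required limits by the elementary Laplace-transform estimates you describe; the paper uses the same cutoff argument and, for $\gamma=1/2$, the Gautschi bound on the exponential integral in place of your split at $t=1/\xi$.

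The genuine difference is in the converse step. The paper recovers $\|T(t)y\|$ from the resolvent via the second-order representation
\[
T(t)y=\frac{1}{2\pi t}\int_{-\infty}^{\infty} e^{(\xi+i\eta)t}R(\xi+i\eta,A)^2y\,d\eta,
\]
applies Cauchy--Schwarz, and bounds the adjoint factor $\int\|R(\xi+i\eta,A^*)z\|^2\,d\eta$ by $\pi M^2\|z\|^2/\xi$ using Plancherel for $(T(t)^*)_{t\ge0}$; this yields $\|T(t)y\|\le \frac{Me^{\xi t}}{2t\sqrt{\pi\xi}}\big(\int\|R(\xi+i\eta,A)y\|^2\,d\eta\big)^{1/2}$, and then one sets $\xi=1/t$. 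Your route bypasses the squared-resolvent formula and the adjoint altogether: from $\|T(t)y\|\le C_0\|T(s)y\|$ for $s\le t$ you obtain directly $t\|T(t)y\|^2\le 2C_0^2e^2\int_0^\infty e^{-2s/t}\|T(s)y\|^2\,ds$, and a single application of Plancherel at $\xi=1/t$ gives the same inequality (up to constants). Both approaches then conclude via the uniform boundedness principle and the factorization $T(t)(-A)^{-\alpha}=\big(T(t/2)(-A)^{-\alpha/2}\big)^2$. Your argument is more elementary and self-contained; the paper's is the standard Gomilko--Shi--Feng/Tomilov device, which has the advantage of being the form that later feeds into the Lyapunov-equation reformulation (Lemma~\ref{lem:Lyap}) and the Cayley-transform estimates.
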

	
	To prove Proposition~\ref{prop:decay_to_integral},
	we study the decay rate of an individual orbit.
	\begin{lemma}
		\label{lem:decay_to_integral_trajectory}
		Let $(T(t))_{t\geq 0}$ be a uniformly bounded $C_0$-semigroup
		on a Hilbert space $H$ with generator 
		$A$ such that $0 \in \varrho(A)$.
		The following two assertions hold for fixed $\alpha >0$ and $x \in H$:
		\begin{enumerate}
			\item  \label{enu:polystable_trajectory1}
			If
			$\|T(t)(-A)^{-\alpha} x\| = o(1/t)$ as $t \to \infty$,
			then
			\begin{align}
			\label{eq:gamma_LT_half}
			\lim_{\xi \to 0+} 
			\xi^{1-2\gamma}
			\int_{-\infty}^{\infty}
			\|R(\xi+i\eta,A)(-A)^{-\alpha \gamma }x\|^2 d\eta = 0
			\end{align}
			for all $\gamma \in (0,1/2)$ and 
			\begin{align}
			\label{eq:gamma_E_half}
			\lim_{\xi \to 0+} 
			\frac{1}{\log (1/\xi)}
			\int_{-\infty}^{\infty}
			\|R(\xi+i\eta,A)(-A)^{-\alpha/2}x\|^2 d\eta = 0.
			\end{align}
			\item  \label{enu:polystable_trajectory2}
			If \eqref{eq:gamma_LT_half} holds for some $\gamma \in (0,1/2)$, then
			\begin{align}
			\label{eq:gamma_LT_half_converse}
			\|T(t)(-A)^{-\alpha \gamma} x\| = o\left(
			\frac{1}{t^\gamma}
			\right)\qquad t \to \infty.
			\end{align}
			On the other hand,
			if \eqref{eq:gamma_E_half} holds, then
			\begin{align}
			\label{eq:gamma_E_half_converse}
			\|T(t)(-A)^{-\alpha /2} x\| = o\left(
			\sqrt{\frac{\log t}{t}}
			\right)\qquad t \to \infty.
			\end{align}
		\end{enumerate}
	\end{lemma}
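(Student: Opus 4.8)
The plan is to move between the semigroup orbit and the resolvent by means of the Laplace transform and the vector‑valued Plancherel theorem. Since $(T(t))_{t\geq0}$ is uniformly bounded, every line $\xi+i\mathbb R$ with $\xi>0$ lies in $\varrho(A)$ and $R(\xi+i\eta,A)z=\int_0^\infty e^{-(\xi+i\eta)t}T(t)z\,dt$ for all $z\in H$; in other words $\eta\mapsto R(\xi+i\eta,A)z$ is the Fourier transform of the $L^2(\mathbb R;H)$‑function obtained by extending $t\mapsto e^{-\xi t}T(t)z$ by zero to $\mathbb R$. Plancherel's identity then gives, for every $z\in H$ and every $\xi>0$,
\[
\int_{-\infty}^{\infty}\|R(\xi+i\eta,A)z\|^2\,d\eta=2\pi\int_0^\infty e^{-2\xi t}\|T(t)z\|^2\,dt .
\]
With $z=(-A)^{-\alpha\gamma}x$ or $z=(-A)^{-\alpha/2}x$, every quantity in the statement becomes a Laplace‑type average of the nonnegative function $t\mapsto\|T(t)z\|^2$, and the problem reduces to a one‑dimensional Tauberian question.

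For assertion (a) I would first use Lemma~\ref{lem:frac_normalize3} with $\beta=1$ to convert the hypothesis $\|T(t)(-A)^{-\alpha}x\|=o(1/t)$ into $\|T(t)(-A)^{-\alpha\gamma}x\|=o(1/t^\gamma)$ for every $\gamma\in(0,1)$, and in particular $\|T(t)(-A)^{-\alpha/2}x\|=o(1/\sqrt t)$. Feeding this into the Plancherel identity and using $\sup_t\|T(t)z\|<\infty$, I would split $\int_0^\infty e^{-2\xi t}\|T(t)z\|^2\,dt$ at a large but fixed $t_0$: after multiplying by the weight $\xi^{1-2\gamma}$ (resp.\ $1/\log(1/\xi)$), the contribution of $[0,t_0]$ vanishes as $\xi\to0+$, while on $[t_0,\infty)$ one uses $\|T(t)z\|^2\leq\varepsilon/t^{2\gamma}$ (resp.\ $\leq\varepsilon/t$). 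For $\gamma\in(0,1/2)$ the substitution $s=2\xi t$ gives $\xi^{1-2\gamma}\int_{t_0}^\infty e^{-2\xi t}t^{-2\gamma}\,dt\leq2^{2\gamma-1}\Gamma(1-2\gamma)$, a finite constant independent of $\xi$, so the tail is $O(\varepsilon)$; for the critical case one uses instead $\int_{t_0}^\infty e^{-2\xi t}\,t^{-1}\,dt=\log(1/\xi)+O(1)$ as $\xi\to0+$, obtained by splitting at $t=1/(2\xi)$, which is exactly where the normalising factor $1/\log(1/\xi)$ comes from. Letting $\varepsilon\downarrow0$ proves \eqref{eq:gamma_LT_half} and \eqref{eq:gamma_E_half}.

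For assertion (b) I would run the same identity in the other direction, using the quasi‑monotonicity of the orbit norm: because $T(t)z=T(t-s)T(s)z$ and $\|T(\tau)\|\leq C_0$ for all $\tau\geq0$, one has $\|T(s)z\|\geq\|T(t)z\|/C_0$ for $0\leq s\leq t$. Bounding the Laplace average from below over the interval $[t/2,t]$ only,
\[
\int_0^\infty e^{-2\xi t'}\|T(t')z\|^2\,dt'\ \geq\ \frac{\|T(t)z\|^2}{C_0^2}\int_{t/2}^t e^{-2\xi t'}\,dt' ,
\]
and the choice $\xi=1/t$ makes $\int_{t/2}^t e^{-2\xi t'}\,dt'$ comparable to $t$. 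Combining with the Plancherel identity yields $t^{2\gamma}\|T(t)(-A)^{-\alpha\gamma}x\|^2\lesssim\bigl[\xi^{1-2\gamma}\int_{-\infty}^\infty\|R(\xi+i\eta,A)(-A)^{-\alpha\gamma}x\|^2\,d\eta\bigr]_{\xi=1/t}$ and, in the critical case, $\tfrac{t}{\log t}\|T(t)(-A)^{-\alpha/2}x\|^2\lesssim\bigl[\tfrac1{\log(1/\xi)}\int_{-\infty}^\infty\|R(\xi+i\eta,A)(-A)^{-\alpha/2}x\|^2\,d\eta\bigr]_{\xi=1/t}$; as $t\to\infty$ the right‑hand sides tend to $0$ by \eqref{eq:gamma_LT_half} and \eqref{eq:gamma_E_half} respectively, which is precisely \eqref{eq:gamma_LT_half_converse} and \eqref{eq:gamma_E_half_converse}. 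The routine points are the justification of the Laplace--Plancherel representation in the vector‑valued setting — standard, since $H$ is a Hilbert space and $t\mapsto e^{-\xi t}T(t)z$ is continuous and square‑integrable on $[0,\infty)$ — and the bookkeeping with the weights. The genuine structural input, and the step I expect to be the crux, is the quasi‑monotonicity $\|T(s)z\|\geq C_0^{-1}\|T(t)z\|$ together with the correct scaling $\xi\sim1/t$: without it the converse would fail for a general nonnegative function satisfying the same Laplace bound (tall thin spikes), so the point is to recognise that this is the estimate that does the job.
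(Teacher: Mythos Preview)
Your proof is correct. Part \ref{enu:polystable_trajectory1} is essentially the paper's argument: invoke Lemma~\ref{lem:frac_normalize3}, apply Plancherel, split the Laplace integral at a large $t_0$, and estimate the tail via the Gamma integral (for $\gamma<1/2$) or the exponential integral asymptotics (for $\gamma=1/2$).

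For part \ref{enu:polystable_trajectory2} you take a genuinely different route. The paper does \emph{not} use Plancherel in the reverse direction; it uses the complex inversion formula
\[
T(t)x=\frac{1}{2\pi t}\int_{-\infty}^{\infty}e^{(\xi+i\eta)t}R(\xi+i\eta,A)^2x\,d\eta,
\]
pairs with $y\in H$, applies Cauchy--Schwarz, and controls the adjoint factor by $\xi\int\|R(\xi+i\eta,A^*)y\|^2\,d\eta\leq\pi M^2\|y\|^2$ (again Plancherel, for the adjoint semigroup). This yields the pointwise bound $\|T(t)x\|\leq \frac{Me^{\xi t}}{2t\sqrt{\pi\xi}}\bigl(\int\|R(\xi+i\eta,A)x\|^2d\eta\bigr)^{1/2}$, after which one sets $\xi=1/t$. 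Your argument is more elementary: the quasi-monotonicity $\|T(s)z\|\geq C_0^{-1}\|T(t)z\|$ lets you bound the Laplace average from below over $[t/2,t]$, and with $\xi=1/t$ this interval already contributes a factor comparable to $t$. Both methods land on the same scaling $\xi=1/t$ and give the same conclusions. The paper's approach delivers a transferable pointwise inequality relating $\|T(t)x\|$ directly to the resolvent integral (useful in the Gomilko--Shi--Feng circle of ideas); your approach avoids the inversion formula and the duality step entirely, and makes explicit why a purely Tauberian argument succeeds here---the orbit norm is not an arbitrary nonnegative function but a quasi-decreasing one, which rules out the ``thin spike'' counterexamples you allude to.
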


	\begin{proof}
		\ref{enu:polystable_trajectory1}
		Suppose that 
		$x \in H$ satisfies
		$\|T(t)(-A)^{-\alpha} x\| = o(1/t)$ as $t \to \infty$.
		Let $\varepsilon >0$ and $\gamma \in (0,1/2]$. 
		By Lemma~\ref{lem:frac_normalize3},
		there exists $t_0 >0$ such that for all $t \geq t_0$,
		\begin{equation}
		\label{eq:orbit_eps}
		\|T(t)(-A)^{-\alpha \gamma}x\| 
		\leq \frac{\varepsilon}{t^\gamma}.
		\end{equation}
		By the Plancherel theorem,
		\[
		\frac{1}{2\pi} 
		\int^{\infty}_{-\infty}
		\|R(\xi + i\eta,A)x \|^2 d\eta = 
		\int^{\infty}_0 e^{-2\xi t} \|T(t)x\|^2 dt
		\]
		for all $\xi >0$.
		Using \eqref{eq:orbit_eps}, we obtain
		\begin{align}
		&\int^{\infty}_0 e^{-2\xi  t} \|T(t)(-A)^{-\alpha\gamma }x\|^2 dt \notag \\
		&\qquad =
		\int^{t_0}_0 e^{-2\xi  t} \|T(t)(-A)^{-\alpha\gamma }x\|^2 dt + 
		\int^\infty_{t_0}e^{-2\xi   t} \|T(t)(-A)^{-\alpha\gamma }x\|^2 dt \notag \\
		&\qquad \leq t_0M^2 \|x\|^2 + \varepsilon^2 \int_{t_0}^{\infty}
		\frac{e^{-2\xi  t}}{t^{2\gamma}} dt \label{eq:TA_int}
		\end{align}
		for all $\xi >0$,
		where $M :=   \|(-A)^{-\alpha\gamma}\|\sup_{t \geq 0} \|T(t)\|$.
		
		First we consider the case $\gamma \in (0,1/2)$. We have that 
		\[
		\int_{0}^{\infty}
		\frac{e^{-2\xi  t}}{t^{2\gamma}} dt = \frac{\Gamma(1-2\gamma)}{(2\xi)^{1-2\gamma}}
		\]
		for all $\xi >0$,
		where $\Gamma$ is the gamma function. Hence
		\eqref{eq:TA_int} yields
		\[
		\limsup_{\xi \to 0+}
		~\!\xi^{1-2\gamma}
		\int^{\infty}_0 e^{-2\xi  t} \|T(t)(-A)^{-\alpha\gamma }x\|^2 dt \leq \frac{\Gamma(1-2\gamma)\varepsilon^2}{2^{1-2\gamma}}.
		\]
		Since $\varepsilon>0$ was arbitrary, it follows that 
		\eqref{eq:gamma_LT_half} holds.
		
		Next we investigate the case $\gamma = 1/2$.
		By the inequality (5) of \cite{Gautschi1959}, 
		the exponential integral
		satisfies
		\[
		\int_\tau^{\infty} \frac{e^{-t}}{t} dt
		\leq 
		e^{-\tau}\log\left( 1+ \frac{1}{\tau} \right)
		\]
		for all $\tau >0$.
		Hence
		\[
		\int_{t_0}^{\infty}
		\frac{e^{-2\xi  t}}{t} dt =
		\int_{2\xi  t_0}^{\infty}
		\frac{e^{-t}}{t} dt \leq e^{-2\xi  t_0} \log\left ( 1 + \frac{1}{2\xi  t_0} \right)
		\]
		for every  $\xi >0$.
		Applying this estimate to \eqref{eq:TA_int}, we obtain
		\begin{equation}
		\label{eq:limsup}
		\limsup_{\xi  \to 0+}
		\frac{1}{\log\left ( 1 + \frac{1}{2\xi  t_0} \right)}
		\int^{\infty}_0 e^{-2\xi  t} \|T(t)(-A)^{-\alpha/2}x\|^2 dt 
		\leq \varepsilon^2.
		\end{equation}
		There exists  $0 < \xi_0 < 1$ such that for all $\xi \in (0,\xi_0)$,
		\[
		\log\left ( 1 + \frac{1}{2\xi  t_0} \right) \leq 2\log (1/\xi).
		\]
		Since $\varepsilon>0$ was arbitrary, it follows from \eqref{eq:limsup}
		that \eqref{eq:gamma_E_half} holds.
		
		\ref{enu:polystable_trajectory2}
		By Theorem~I.3.9 and Proposition~I.3.10 of \cite{Eisner2010}, we have that 
		\[
		T(t)x = \frac{1}{2\pi t} \int^{\infty}_{-\infty} e^{(\xi  + i \eta)t} 
		R(\xi  + i\eta,A)^2 x d\eta
		\]
		for all $x \in H$, $\xi >0$, and $t >0$.
		The Cauchy-Schwartz inequality gives
		\begin{align}
		|\langle T(t)x,y \rangle| &\leq 
		\frac{e^{\xi  t}}{2\pi t} 
		\int^{\infty}_{-\infty}
		\left| \langle R(\xi  + i\eta,A) x,R(\xi  - i\eta,A^*) y \rangle\right|
		d\eta  \notag \\
		&\leq 
		\frac{e^{\xi  t}}{2\pi t} 
		\left(
		\int_{-\infty}^{\infty} \|R(\xi  + i\eta,A) x\|^2 d\eta
		\right)^{\frac{1}{2}} \left(
		\int_{-\infty}^{\infty} \|R(\xi  + i\eta,A^*) y\|^2 d\eta
		\right)^{\frac{1}{2}}  \label{eq:Txy_bound}
		\end{align}
		for all $x,y \in H$, $\xi >0$, and $t >0$.
		Since $(T(t))_{t\geq 0}$ is uniformly bounded,
		it follows from the Plancherel theorem that 
		\begin{align*}
		\sup_{\xi  >0}~\! \xi  \int^{\infty}_{-\infty} \|R(\xi +i\eta,A^*)y\|^2 d\eta
		&=2\pi
		\sup_{\xi  >0} ~\!\xi 
		\int^{\infty}_{0} e^{-2\xi  t} \|T(t)^*y\|^2 d\eta \\
		&\leq 
		\pi M^2 \|y\|^2
		\end{align*}
		for every $y \in H$,
		where $M := \sup_{t\geq 0} \|T(t)\| = \sup_{t\geq 0} \|T(t)^*\|$.
		This and \eqref{eq:Txy_bound} establish
		\begin{equation}
		\label{eq:Tx_bound_trajectory}
		\|T(t)x\| \leq \frac{Me^{\xi  t} }{2t\sqrt{\pi \xi}}
		\left(
		\int_{-\infty}^{\infty} \|R(\xi  + i\eta,A) x\|^2 d\eta
		\right)^{\frac{1}{2}}
		\end{equation}
		for all $x \in H$, $\xi >0$, and $t >0$.
		
		Suppose that $x \in H$ satisfies \eqref{eq:gamma_LT_half} 
		for some $0 < \gamma < 1/2$ or 
		\eqref{eq:gamma_E_half}. In the latter case, we set $\gamma = 1/2$.
		For $\xi >0$, define
		\begin{equation}
		\label{eq:vartheta_def}
		h_{\gamma}(\xi) :=
		\begin{cases}
		\xi^{1-2\gamma} & \text{if $0<\gamma < 1/2$} \\
		\frac{1}{\log(1/\xi)} & \text{if $\gamma = 1/2$}.
		\end{cases}
		\end{equation}
		Taking $\xi = 1/t$,
		we have from \eqref{eq:Tx_bound_trajectory}
		that 
		for  every $\varepsilon>0$,
		there exists $t_0 >1$ such that for all $t \geq t_0$,
		\[
		\|T(t)(-A)^{-\alpha \gamma }x\| \leq 
		\frac{Me\varepsilon}{2\sqrt{\pi}} 
		\sqrt{
			\frac{1}{t h_\gamma(1/t)}}.
		\]
		By the definition \eqref{eq:vartheta_def} of 
		$h_\gamma$, we obtain \eqref{eq:gamma_LT_half_converse} for $0<\gamma<1/2$
		and 
		\eqref{eq:gamma_E_half_converse} for $\gamma = 1/2$.
		\hfill $\Box$	
	\end{proof}
	
	\begin{proof}[of Proposition~\ref{prop:decay_to_integral}]
		We easily see that 
		the assertions \ref{enu:polystable1} and \ref{enu:polystable2} hold,
		by combining
		Theorem~\ref{thm:decay_charac} with
		Lemmas~\ref{lem:frac_normalize1}, 
		\ref{lem:frac_normalize2}, and \ref{lem:decay_to_integral_trajectory}.
		
		It remains to prove that \ref{enu:polystable3} holds.
		By \ref{enu:polystable_trajectory2} of 
		Lemma~\ref{lem:decay_to_integral_trajectory},
		\[
		\sup_{t >2} 
		\sqrt{
			\frac{t}{\log t}}
		\big\|
		T(t)(-A)^{-\alpha/2} x
		\big\| < \infty
		\]
		for every $x \in H$.
		By the uniform boundedness principle, there exists $C_1>0$ such that
		\[
		\big\|
		T(t)(-A)^{-\alpha/2} 
		\big\| \leq  C_1\sqrt{
			\frac{\log t}{ t}}\qquad \forall t >2.
		\]
		Hence
		\begin{align*}
		\|T(t)(-A)^{-\alpha}\| &\leq 
		\big\|T(t/2)(-A)^{-\alpha/2} \big\|^2 \\
		&\leq 
		C_1^2 \frac{\log (t/2)}{t/2}
		\qquad \forall t >4.
		\end{align*}
		Thus \eqref{eq:TA_o} holds.
		\hfill $\Box$
	\end{proof}
	
	Let  $\xi  >0$ and $A$ be the generator of
	a uniformly bounded $C_0$-semigroup $(T(t))_{t\geq 0}$ on a Hilbert space $H$.
	Consider
	the Lyapunov equation
	\begin{equation}
	\label{eq:Lyap}
	\langle
	(A-\xi  I) x_1, Q(\xi )x_2 
	\rangle +
	\langle
	Q(\xi )x_1, (A-\xi  I) x_2
	\rangle = 
	-\langle
	x_1,x_2
	\rangle
	\end{equation}
	for $x_1,x_2 \in D(A)$. It
	has a unique self-adjoint solution $Q(\xi ) \in \mathcal{L}(H)$ given by
	\begin{equation}
	\label{eq:Q_def}
	Q(\xi )x
	:=
	\int^{\infty}_0 e^{-2\xi  t} T(t)^*T(t)x dt,\quad x \in H;
	\end{equation}
	see, e.g., Theorem~4.1.23 of \cite{Curtain1995}.
	
	We restate
	Proposition~\ref{prop:decay_to_integral} by using
	the self-adjoint solution of the Lyapunov equation \eqref{eq:Lyap}.
	\begin{lemma}
		\label{lem:Lyap}
		Let $(T(t))_{t\geq 0}$ be a uniformly bounded $C_0$-semigroup
		on a Hilbert space $H$ with generator 
		$A$ such that $i \mathbb{R} \subset \varrho(A)$, and let
		$Q(\xi )\in \mathcal{L}(H)$ be the self-adjoint solution of 
		the Lyapunov equation \eqref{eq:Lyap} for $\xi >0$.
		The following three assertions hold for a fixed $\alpha >0$:
		\begin{enumerate}
			\item  \label{enu:poly_Lyap1} 
			$\|T(t)(-A)^{-\alpha}\| = O(1/t)$ as $t \to \infty$ if and only if	 
			$Q(\xi )$
			satisfies
			\begin{align}
			\label{eq:Q_cond1}
			\lim_{\xi  \to 0+} \xi^{1-2\gamma}\langle(-A)^{-\alpha \gamma} 
			x, Q(\xi ) (-A)^{-\alpha \gamma}x\rangle  = 0
			\end{align}
			for all $x \in H$ and
			some/all $\gamma \in (0,1/2)$.
			\item  \label{enu:poly_Lyap2} 
			If $\|T(t)(-A)^{-\alpha}\| = O(1/t)$ as $t \to \infty$, then
			$Q(\xi )$ satisfies
			\begin{align}
			\label{eq:Q_cond2}
			\lim_{\xi  \to 0+} \frac{\langle(-A)^{-\alpha/2} 
				x, Q(\xi ) (-A)^{-\alpha/2}x\rangle}{\log(1/\xi)}  = 0
			\end{align}
			for all $x \in H$.
			\item \label{enu:poly_Lyap3} 
			If $Q(\xi )$ satisfies \eqref{eq:Q_cond2}  for all $x \in H$,
			then
			\begin{equation}
			\label{eq:TA_o2}
			\|T(t)(-A)^{-\alpha}\|  = O \left(
			\frac{\log t}{t} 
			\right)\qquad t\to \infty.
			\end{equation}
		\end{enumerate}
	\end{lemma}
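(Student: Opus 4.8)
The plan is to reduce Lemma~\ref{lem:Lyap} to Proposition~\ref{prop:decay_to_integral} by means of a single identity that rewrites the quadratic form of $Q(\xi)$ as the squared $L^2$-norm of the resolvent along the vertical line $\re\lambda=\xi$. Since $i\mathbb{R}\subset\varrho(A)$, the operator $-A$ is sectorial, so $(-A)^{-\beta}\in\mathcal{L}(H)$ for every $\beta>0$; in particular $(-A)^{-\alpha\gamma}x$ and $(-A)^{-\alpha/2}x$ belong to $H$ for every $x\in H$, and all the quadratic forms appearing in the statement are well defined.

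First I would record that, for every $y\in H$ and every $\xi>0$,
\[
\langle y, Q(\xi)y\rangle
= \int_0^\infty e^{-2\xi t}\|T(t)y\|^2\,dt
= \frac{1}{2\pi}\int_{-\infty}^\infty \|R(\xi+i\eta,A)y\|^2\,d\eta.
\]
The first equality follows from the formula \eqref{eq:Q_def} for $Q(\xi)$ together with $\langle y, T(t)^*T(t)y\rangle=\|T(t)y\|^2$, by pulling the continuous functional $\langle y,\cdot\rangle$ through the Bochner integral defining $Q(\xi)$, which converges in $\mathcal{L}(H)$ because $(T(t))_{t\geq 0}$ is uniformly bounded. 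The second equality is the Plancherel identity already used in the proof of Lemma~\ref{lem:decay_to_integral_trajectory}, applied with $y$ in place of $x$.

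Next I would apply this identity with $y=(-A)^{-\alpha\gamma}x$ for assertion~\ref{enu:poly_Lyap1} and with $y=(-A)^{-\alpha/2}x$ for assertions~\ref{enu:poly_Lyap2} and \ref{enu:poly_Lyap3}. This identifies $\langle(-A)^{-\alpha\gamma}x,Q(\xi)(-A)^{-\alpha\gamma}x\rangle$ with $\frac{1}{2\pi}\int_{-\infty}^\infty\|R(\xi+i\eta,A)(-A)^{-\alpha\gamma}x\|^2\,d\eta$, and likewise it identifies the quadratic form with exponent $\alpha/2$ with the corresponding resolvent integral. Hence the conditions \eqref{eq:Q_cond1} and \eqref{eq:Q_cond2} are, up to the harmless constant factor $1/(2\pi)$ which is irrelevant for a limit equal to zero, exactly the resolvent-integral conditions in assertions~\ref{enu:polystable1} and \ref{enu:polystable2} of Proposition~\ref{prop:decay_to_integral}. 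Consequently assertions~\ref{enu:poly_Lyap1}, \ref{enu:poly_Lyap2}, and \ref{enu:poly_Lyap3} follow directly from assertions~\ref{enu:polystable1}, \ref{enu:polystable2}, and \ref{enu:polystable3} of Proposition~\ref{prop:decay_to_integral}, respectively.

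I do not expect any genuine obstacle: the statement is merely a reformulation of Proposition~\ref{prop:decay_to_integral} in the language of the Lyapunov equation \eqref{eq:Lyap}. The only step deserving a word of care is the Plancherel identity, i.e., that $t\mapsto e^{-\xi t}T(t)y$ belongs to $L^2(0,\infty;H)$ with Fourier transform $\eta\mapsto R(\xi+i\eta,A)y$; this is standard for generators of bounded $C_0$-semigroups on Hilbert spaces and is the very identity already invoked (via Theorem~I.3.9 and Proposition~I.3.10 of \cite{Eisner2010} and the Plancherel theorem) in the proof of Lemma~\ref{lem:decay_to_integral_trajectory}. Everything else is bookkeeping.
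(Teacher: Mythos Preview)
Your proposal is correct and follows essentially the same approach as the paper: both establish the identity $\langle y, Q(\xi)y\rangle = \frac{1}{2\pi}\int_{-\infty}^\infty \|R(\xi+i\eta,A)y\|^2\,d\eta$ from the Plancherel theorem and the definition \eqref{eq:Q_def} of $Q(\xi)$, and then reduce all three assertions directly to the corresponding parts of Proposition~\ref{prop:decay_to_integral}. The paper's proof is slightly terser but the content is identical.
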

	\begin{proof}
		Let $\xi  >0$ and $x \in H$.
		The Plancherel theorem shows that 
		\[
		\frac{1}{2\pi} 
		\int^{\infty}_{-\infty}
		\|R(\xi  + i\eta,A)x \|^2 d\eta = 
		\int^{\infty}_0 e^{-2\xi  t} \|T(t)x\|^2 dt.
		\]
		By the definition \eqref{eq:Q_def} of $Q(\xi )$, we obtain
		\[
		\langle x, Q(\xi ) x \rangle = 
		\frac{1}{2\pi} 
		\int^{\infty}_{-\infty}
		\|R(\xi  + i\eta,A)x \|^2 d\eta.
		\]
		Hence the assertions \ref{enu:poly_Lyap1}--\ref{enu:poly_Lyap3}
		immediately follow
		from Proposition~\ref{prop:decay_to_integral}.
		\hfill $\Box$	
	\end{proof}
	
	\section{Decay rate of Cayley transform}
	\label{sec:Decay_CT}
	In this section, we study the decay rate of
	the Cayley transform of the generator 
	of a polynomially stable $C_0$-semigroup.
	We start by establishing the discrete version of 
	the  normalization results on
	polynomial decay rates developed in 
	Lemmas~\ref{lem:frac_normalize1}--\ref{lem:frac_normalize3}.
	\begin{lemma}
		\label{lem:alpha_gamma}
		Let $A$ be the generator of
		a uniformly bounded $C_0$-semigroup on a Banach space $X$ 
		such that $0 \in \varrho(A)$.
		Suppose that $B \in \mathcal{L}(X)$ is power bounded and
		commutes with $A^{-1}$.
		Then the following assertions hold for
		constants $\alpha,\beta>0$ and a nondecreasing function
		$f:\mathbb{N} \to (0,\infty)$.
		\begin{enumerate}
			\item \label{enu:poly_dist1}
			The statements 
			{\rm (i)-(ii)} below are equivalent:\vspace{5pt}
			\begin{enumerate}
				\item $\|B^n (-A)^{-\alpha}\| = O \left(
				\dfrac{f(n)}{n}\right)$ as $n \to \infty$. \vspace{5pt}
				\item $\|B^n (-A)^{-\alpha\gamma }\| = O \left(
				\dfrac{f(n)^{\gamma}}{n^{\gamma}}
				\right)$ as $n \to \infty$ for some/all  $\gamma >0$.\vspace{5pt}
			\end{enumerate}
			\item \label{enu:poly_dist2}
			The statements 
			{\rm (i)--(ii)} below are equivalent:\vspace{5pt}
			\begin{enumerate}
				\item $\|B^n (-A)^{-\alpha} x\| = o \left(
				\dfrac{f(n)}{n} \right)$ as $n \to \infty$ for all $x \in X$.\vspace{5pt}
				\item  $\|B^n (-A)^{-\alpha\gamma }x\| = o \left(
				\dfrac{f(n)^{\gamma}}{n^{\gamma}}
				\right)$ as $n \to \infty$ for all $x \in X$  and some/all
				$\gamma >0$. \vspace{5pt}
			\end{enumerate}
			\item \label{enu:poly_dist3}
			If $x \in X$ satisfies \[
			\|B^n (-A)^{-\alpha} x\| = o \left(
			\dfrac{f(n)^\beta}{n^\beta} \right)\qquad n \to \infty,
			\]
			then
			\[
			\|B^n (-A)^{-\alpha \gamma } x\| = o \left(
			\dfrac{f(n)^{\beta\gamma}}{n^{\beta\gamma}} \right)\qquad n \to \infty
			\]
			for every $0<\gamma < 1$.
		\end{enumerate}
	\end{lemma}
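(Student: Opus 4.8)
The plan is to carry the moment-inequality arguments of Lemmas~\ref{lem:frac_normalize1}--\ref{lem:frac_normalize3} over to the discrete setting, replacing the splitting $t = k\cdot(t/k)$ of continuous time by a splitting $n = n_1 + \cdots + n_k$ of the exponent of $B$ into $k$ nearly equal integer pieces. Two facts are used repeatedly. Since $A$ generates a bounded $C_0$-semigroup and $0 \in \varrho(A)$, the operator $-A$ is sectorial and invertible, so $(-A)^{-s} \in \mathcal{L}(X)$ for all $s>0$ and the moment inequality holds in the form (cf.~\eqref{eq:kdtheta_est} and Proposition~6.6.4 of \cite{Haase2006}): for $0 < \sigma < \tau$ there is $L>0$ with $\|(-A)^{-\sigma}y\| \le L\|y\|^{1-\sigma/\tau}\|(-A)^{-\tau}y\|^{\sigma/\tau}$ for all $y \in X$. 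Moreover, since $B$ commutes with $A^{-1}$, it commutes with every resolvent $R(\lambda,A)$ and hence with each $(-A)^{-s}$ ($s>0$), e.g. by writing $(-A)^{-s}$ as a Balakrishnan integral of resolvents for $s \in (0,1)$ and taking products; thus $B^n(-A)^{-s} = (-A)^{-s}B^n$ may be used freely.

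The key lemma is a splitting estimate: for $\mu>0$, an integer $k \ge 1$, and the nondecreasing $f$, if $\|B^n(-A)^{-\mu}\| = O((f(n)/n)^\beta)$ then $\|B^n(-A)^{-k\mu}\| = O((f(n)/n)^{k\beta})$, and likewise with ``$=O(\cdot)$'' replaced by ``$=o(\cdot)$ for all $x \in X$''. To see this, write $n = n_1 + \cdots + n_k$ with each $n_i \in \{\lfloor n/k\rfloor, \lceil n/k\rceil\}$, so $B^n(-A)^{-k\mu} = \prod_{i=1}^k B^{n_i}(-A)^{-\mu}$ by the commutation above; for $n \ge 2k$ one has $f(n_i) \le f(n)$ and $n_i \ge n/(2k)$, whence each factor is $O((f(n)/n)^\beta)$, giving the $O$-claim. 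For the $o$-claim one first upgrades the pointwise hypothesis to a uniform $O$-bound by the uniform boundedness principle, then in the product keeps one factor $B^{n_k}(-A)^{-\mu}x$ (with $n_k \ge n/k \to \infty$) in its pointwise-$o$ form while bounding the remaining $k-1$ factors by that uniform $O$-bound, and lets $\varepsilon \to 0$.

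Assertions~\ref{enu:poly_dist1} and~\ref{enu:poly_dist2} then follow from the chain (i) $\Rightarrow$ (ii for all $\gamma$) $\Rightarrow$ (ii for some $\gamma$) $\Rightarrow$ (i), where the middle step is trivial. For (i) $\Rightarrow$ (ii): given the estimate at exponent $\alpha$, pick an integer $k > \gamma$, apply the splitting lemma to get the estimate at exponent $k\alpha$, and use the moment inequality with $\sigma = \alpha\gamma$, $\tau = k\alpha$ (so $\sigma/\tau = \gamma/k \in (0,1)$) applied to $y = B^n x$, together with power boundedness of $B$, to recover the estimate at exponent $\alpha\gamma$ at the rate $(f(n)/n)^\gamma$. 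For (ii for some $\gamma_0$) $\Rightarrow$ (i): pick an integer $k$ with $k\gamma_0 > 1$, apply the splitting lemma to pass from exponent $\alpha\gamma_0$ to exponent $k\alpha\gamma_0$, and use the moment inequality with $\sigma = \alpha$, $\tau = k\alpha\gamma_0$ (so $\sigma/\tau = 1/(k\gamma_0) \in (0,1)$). The $O$-statements give~\ref{enu:poly_dist1} and the $o$-for-all-$x$ statements give~\ref{enu:poly_dist2}. Assertion~\ref{enu:poly_dist3} is immediate and needs neither splitting nor the uniform boundedness principle: apply the moment inequality with $\sigma = \alpha\gamma$, $\tau = \alpha$ (so $\sigma/\tau = \gamma \in (0,1)$) to $y = B^n x$, use $\|B^n x\| \le (\sup_m \|B^m\|)\|x\|$, and raise the hypothesis $\|B^n(-A)^{-\alpha}x\| = o((f(n)/n)^\beta)$ to the power $\gamma$.

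The step requiring the most care is the splitting lemma --- in particular the integer bookkeeping ($\sum_i n_i = n$ with each $n_i$ of order $n/k$ and $n_i \ge 1$ for $n$ large) and the observation that it suffices for $f$ to be nondecreasing, rather than $n \mapsto f(n)/n$ to be monotone, in order to dominate $f(n_i)/n_i$ by a constant multiple of $f(n)/n$ uniformly in $i$. This is the only genuine point of departure from the continuous prototype; everything else is a transcription. The commutation of $B$ with the fractional powers, though routine, should be recorded explicitly, since the whole argument hinges on commuting $(-A)^{-s}$ past $B^n$.
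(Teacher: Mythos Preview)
Your proposal is correct and follows essentially the same approach as the paper: commutation of $B$ with the fractional powers $(-A)^{-s}$, a product decomposition to pass from exponent $\mu$ to $k\mu$, the moment inequality to interpolate back, and the uniform boundedness principle for the $o$-version. The only cosmetic difference is that the paper factors $B^{kn}(-A)^{-k\delta} = (B^n(-A)^{-\delta})^k$ along the subsequence $kn$ and then fills in the gaps $kn+\ell$ using power boundedness and monotonicity of $f$, whereas you split $n = n_1+\cdots+n_k$ with $n_i \approx n/k$ to get the estimate for all $n$ directly; both are straightforward discretizations of the $t = k\cdot(t/k)$ trick.
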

	\begin{proof}
		\ref{enu:poly_dist1}	
		(ii) $\Rightarrow$ (i): 
		Let $\gamma>0$ and 
 		set $\delta := \alpha \gamma$. By assumption,
		there exists a constant $C >0$ such that 
		\begin{equation}
		\label{eq:BnA_bound}
		\|B^n (-A)^{-\delta}\| \leq C\left(
		\frac{f(n)}{n}\right)^{\gamma}\qquad \forall n \in \mathbb{N}.
		\end{equation}
		For every $a >0$,
		$B$ commutes with $(-A)^{-a}$
		by Proposition~3.1.1.f) of \cite{Haase2006}, and hence
		for every $x \in D((-A)^a)$, $Bx \in D((-A)^a)$ and
		$B(-A)^{a}x = (-A)^a Bx$ by Proposition~B.7 of \cite{Arendt2001}.
		Using \eqref{eq:BnA_bound}, we  obtain
		\begin{equation}
		\label{eq:k_powered_bound}
		\|B^{kn} (-A)^{-k\delta}\| \leq 
		\|B^n (-A)^{-\delta}\|^k \leq C^k\left(
		\frac{f(n)}{n}
		\right)^{k\gamma}\qquad \forall k,n \in \mathbb{N}.
		\end{equation}
		From the moment inequality (see, e.g., Proposition~6.6.4 of
		\cite{Haase2006}), it follows that for every $k \in \mathbb{N}$ and
		every 
		$\vartheta \in (0,1)$, there exists a constant $L_0 >0$ such that 
		\begin{align}
		\|B^{kn} (-A)^{-\vartheta k \delta }x\| &=
		\|(-A)^{k\delta (1-\vartheta)} B^{kn} (-A)^{-k \delta }x\| \notag \\
		&\leq 
		L_0\|(-A)^{k\delta } B^{kn} (-A)^{-k \delta }x\|^{1-\vartheta}
		~\!
		\|B^{kn} (-A)^{-k \delta }x\|^{\vartheta} \notag \\
		&\leq 
		L_0\|B^{kn}\|^{1-\vartheta}
		~\!\|B^{kn} (-A)^{-k \delta }\|^{\vartheta}~\! \|x\|
		\label{eq:B_moment_ineq}
		\end{align} 
		for every $x \in X$ and every $n \in \mathbb{N}$.
		Hence \eqref{eq:k_powered_bound} yields
		\begin{align}
		\label{eq:Adkn_bound}
		\|B^{kn} (-A)^{-\vartheta k \delta }\|
		\leq L_0 M^{1-\vartheta} C^{k\vartheta} \left(
		\frac{f(n)}{n}
		\right)^{k\gamma \vartheta}\qquad \forall n \in \mathbb{N},
		\end{align}
		where $M:= \sup_{n \in \mathbb{N}\cup \{0\}} \|B^n\|$.
		Choose a constant $k \in \mathbb{N}$ 
		satisfying $k > 1/\gamma$ and set 
		$\vartheta = 1/(k\gamma)$.
		Then 
		\[
		\|B^{kn} (-A)^{-\alpha }\| \leq 
		k L_1 M C^{1/\gamma} ~\!
		\frac{f(kn)}{kn}
		\qquad \forall n \in \mathbb{N}
		\]
		for some $L_1 >0$.
		Since 
		\[
		\frac{kn+\ell}{kn} \leq \frac{k(n+1)}{kn} \leq 2
		\]
		for every $\ell=0,\dots,k-1$ and every $n\in \mathbb{N}$,
		it follows that 
		\begin{align*}
		\|B^{kn + \ell} (-A)^{-\alpha }\| &\leq 
		\|B^{\ell}\| ~\! 
		\|B^{kn } (-A)^{-\alpha }\|  \\
		&\leq 
		2k L_1 M^{2} C^{1/\gamma} ~\!
		\frac{f (kn+\ell )}{kn+\ell}
		\end{align*}
		for every $\ell=0,\dots,k-1$ and every $n\in \mathbb{N}$.
		Thus, (i) holds. 
		
		(i) $\Rightarrow$ (ii): 
		Take $\tilde \gamma>0$.
		Substituting $\gamma=1$ and $\vartheta = \tilde \gamma / k$ with 
		$k > \tilde \gamma$ into \eqref{eq:Adkn_bound}, we obtain
		\[
		\|B^{kn} (-A)^{-\alpha \tilde \gamma }\| \leq 
		L_2 M C^{\tilde \gamma} \left(
		\frac{f(n)}{n}
		\right)^{\tilde \gamma}\qquad \forall n \in \mathbb{N}
		\]
		for some $L_2 >0$.
		Hence we obtain (ii) with $\gamma = \tilde \gamma$
		by similar arguments as above.
		
		\ref{enu:poly_dist2} Suppose that (ii) holds, and  
		set $\delta := \alpha \gamma$.
		Since
		\[
		\sup_{n \in \mathbb{N}}
		\left(
		\frac{n}{f(n)}
		\right)^{\gamma} \|B^n (-A)^{-\delta} x\| < \infty,
		\]
		for all $x \in X$,
		it follows from the uniform boundedness principle that 
		there exists a constant $C_1 >0$ such that
		\[
		\|B (-A)^{-\delta} \| \leq C_1 \left(
		\frac{f(n)}{n}
		\right)^{\gamma} \qquad \forall n \in \mathbb{N}.
		\]
		Let $x \in X$. Then
		\begin{align*}
		\|B^{kn} (-A)^{-k\delta } x\| &\leq 
		\|B^{n} (-A)^{-\delta } \|^{k-1}~\!
		\|B^{n} (-A)^{-\delta } x\| \\
		&\leq 
		C_1^{k-1}  \left(
		\frac{f(n)}{n}
		\right)^{(k-1)\gamma}\|B^{n} (-A)^{-\delta} x\|
		\qquad \forall k,n \in \mathbb{N}.
		\end{align*}
		This implies
		\[
		\|B^{kn} (-A)^{-k\delta } x\| = 
		o \left(
		\frac{f(n)^{k\gamma}}{n^{k\gamma}}
		\right)\qquad n \to \infty.
		\]
		The rest of the proof is the same as the proof of a). We omit the details.
		
		\ref{enu:poly_dist3} This assertion directly follows from
		the application of the moment inequality as in \eqref{eq:B_moment_ineq}.
		\hfill $\Box$	
	\end{proof}
	
	Using the argument based on
	Lyapunov equations developed in Section~\ref{sec:Lyap},
	we estimate the decay rate of 
	the Cayley transform.
	We use the following preliminary result obtained in 
	the proof of \cite[Theorem~4.3]{Guo2006}.
	\begin{lemma}
		\label{lem:yx_bound}
		Let $A$ be the generator of
		a uniformly bounded $C_0$-semigroup on a Hilbert space $H$. 
		Suppose that 
		the Cayley transform $A_d := (I+A) (I-A)^{-1}$ is power bounded.
		Let $r \in (0,1)$ and 
		\begin{equation}
		\label{eq:sig_r}
		2 \xi = \frac{1-r^2}{1+r^2}.
		\end{equation}
		Then the self-adjoint solution $Q(\xi)\in \mathcal{L}(H)$ of the Lyapunov equation \eqref{eq:Lyap}
		satisfies
		\begin{align}
		\label{eq:y_tilx_bound}
		(n+1) 
		|\langle 
		y, r^n A_d^n (I-A)^{-1}x
		\rangle| \leq 
		M \|y\|
		\sqrt{\frac{\langle x , Q(\xi) x \rangle}{1-r^4}}
		\end{align}
		for every $x,y \in H$ and every $n \in \mathbb{N}$,
		where $M := \sup_{n \in \mathbb{N} \cup\{0\}} \|A_d^n\|$.
	\end{lemma}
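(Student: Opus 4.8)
The plan is to reduce the asserted inequality to the single scalar estimate
\[
\sum_{n=0}^{\infty} r^{2n}\,\|A_d^n (I-A)^{-1}x\|^2 \;\le\; \frac{\langle x,Q(\xi)x\rangle}{1+r^2},\qquad x\in H .
\]
Granting this, the lemma follows at once: writing $(n+1)A_d^n(I-A)^{-1}=\sum_{k=0}^{n}A_d^{k}\,A_d^{n-k}(I-A)^{-1}$ (each summand equals $A_d^n(I-A)^{-1}$), inserting $r^n=r^kr^{n-k}$, pairing with $y$ and $x$, and applying the Cauchy--Schwarz inequality in $\mathbb C^{n+1}$ gives
\[
(n+1)\,r^n\,|\langle y,A_d^n(I-A)^{-1}x\rangle|\le\Big(\sum_{k=0}^{n}r^{2k}\|(A_d^*)^ky\|^2\Big)^{1/2}\Big(\sum_{k=0}^{n}r^{2k}\|A_d^{k}(I-A)^{-1}x\|^2\Big)^{1/2}.
\]
The first factor is at most $M\|y\|(1-r^2)^{-1/2}$ since $\|(A_d^*)^k\|=\|A_d^k\|\le M$, the second is at most $\bigl(\langle x,Q(\xi)x\rangle/(1+r^2)\bigr)^{1/2}$ by the scalar estimate, and $1-r^4=(1-r^2)(1+r^2)$; multiplying these yields exactly the claimed bound.

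To prove the scalar estimate I would argue as follows. Since $1\in\varrho(A)$, one has $I-zA_d=(1+z)\bigl(\tfrac{1-z}{1+z}I-A\bigr)(I-A)^{-1}$ for $z\ne-1$, hence for $|z|<1$ (so that $\tfrac{1-z}{1+z}\in\varrho(A)$) the operator $g(z):=(I-zA_d)^{-1}(I-A)^{-1}=\tfrac{1}{1+z}R\bigl(\tfrac{1-z}{1+z},A\bigr)$, and, since $A_d$ is power bounded, $g(z)=\sum_{n\ge0}z^nA_d^n(I-A)^{-1}$ with the series converging absolutely in $\mathcal L(H)$ for $|z|<1$. From $R\bigl(\tfrac{1-z}{1+z},A\bigr)x=\int_0^\infty e^{-t}e^{2zt/(1+z)}T(t)x\,dt$ and the Laguerre generating function $\tfrac{1}{1+z}e^{2zt/(1+z)}=\sum_{n\ge0}(-1)^nL_n(2t)z^n$ ($L_n$ the Laguerre polynomials), comparing Taylor coefficients — legitimate because on a small circle $|z|=\rho$ the integrand decays exponentially in $t$ — gives $A_d^n(I-A)^{-1}x=(-1)^n\int_0^\infty e^{-t}L_n(2t)T(t)x\,dt$. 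Then, applying Parseval to the $H$-valued Fourier series $\theta\mapsto g(re^{i\theta})x$ and then Fubini (the $\theta,t_1,t_2$ integrand is dominated by a constant multiple of $e^{-\frac{1-r}{1+r}(t_1+t_2)}$, using $|e^{-t}e^{2zt/(1+z)}|\le e^{-\frac{1-r}{1+r}t}$ on $|z|=r$) yields, with $K(t,s):=e^{-(t+s)}\sum_{n\ge0}L_n(2t)L_n(2s)r^{2n}$,
\[
\sum_{n\ge0}r^{2n}\|A_d^n(I-A)^{-1}x\|^2=\int_0^\infty\!\!\int_0^\infty K(t,s)\,\langle T(t)x,T(s)x\rangle\,dt\,ds .
\]
The Hille--Hardy bilinear generating formula evaluates the inner sum and shows $K(t,s)=\tfrac{1}{1-r^2}e^{-(t+s)/(2\xi)}I_0\!\bigl(\tfrac{4r\sqrt{ts}}{1-r^2}\bigr)\ge0$, where $I_0$ is the modified Bessel function of order zero and the normalisation $2\xi=\tfrac{1-r^2}{1+r^2}$ enters through $1+\tfrac{2r^2}{1-r^2}=\tfrac1{2\xi}$. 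As $K$ is nonnegative and symmetric, bounding the (real, nonnegative) left-hand side via $\re\langle T(t)x,T(s)x\rangle\le\tfrac12(\|T(t)x\|^2+\|T(s)x\|^2)$ gives $\sum_{n\ge0}r^{2n}\|A_d^n(I-A)^{-1}x\|^2\le\int_0^\infty\|T(t)x\|^2\bigl(\int_0^\infty K(t,s)\,ds\bigr)dt$; the Laplace transform $\int_0^\infty u\,e^{-\alpha u^2}I_0(\beta u)\,du=\tfrac1{2\alpha}e^{\beta^2/(4\alpha)}$ with $\alpha=\tfrac1{2\xi}$, together with the identity $\tfrac1{2\xi}-\tfrac{4r^2}{1-r^4}=2\xi$, gives $\int_0^\infty K(t,s)\,ds=\tfrac{2\xi}{1-r^2}e^{-2\xi t}$; and finally $\langle x,Q(\xi)x\rangle=\int_0^\infty e^{-2\xi t}\|T(t)x\|^2\,dt$ and $\tfrac{2\xi}{1-r^2}=\tfrac1{1+r^2}$ close the estimate.

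The main obstacle is this last paragraph: while every interchange of summation and integration is routine once the exponential bound $|e^{-t}e^{2zt/(1+z)}|\le e^{-\frac{1-r}{1+r}t}$ on $|z|=r$ is available, the substance is that the whole cascade of Laguerre, Hille--Hardy and Bessel identities collapses to the precise constant $1/(1+r^2)$ under the calibration $2\xi=\tfrac{1-r^2}{1+r^2}$. In particular, the nonnegativity of the kernel $K$ — which is exactly what makes replacing $\langle T(t)x,T(s)x\rangle$ by $\|T(t)x\|^2$ legitimate — is invisible at the level of individual terms and genuinely requires the Hille--Hardy summation.
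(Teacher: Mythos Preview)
Your proof is correct. The reduction in the first paragraph --- rewriting $(n+1)A_d^n$ as $\sum_{k=0}^n A_d^{k}A_d^{n-k}$ and applying Cauchy--Schwarz against the power-bounded sequence $(r^k(A_d^*)^ky)_k$ --- matches the argument in Guo--Zwart, and the constants come out exactly right. Where you diverge from the source the paper cites is in the proof of the scalar estimate
\[
\sum_{n\ge0}r^{2n}\|A_d^n(I-A)^{-1}x\|^2\le\frac{\langle x,Q(\xi)x\rangle}{1+r^2}.
\]
You reach it through the Laguerre representation, the Hille--Hardy bilinear formula, and a Bessel-kernel integration; every identity and every interchange you invoke is correct, and the exponential bound $|e^{-t}e^{2zt/(1+z)}|\le e^{-\frac{1-r}{1+r}t}$ on $|z|=r$ indeed dominates all the integrands. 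The approach of Guo--Zwart, by contrast, is purely Lyapunov-algebraic: writing $\hat Q(u):=\tfrac{1}{1+r^2}\langle(I-A)u,Q(\xi)(I-A)u\rangle$ for $u\in D(A)$, one uses the continuous Lyapunov identity $2\re\langle Au,Q(\xi)u\rangle=2\xi\langle u,Q(\xi)u\rangle-\|u\|^2$ together with $(I-A)A_d=I+A$ on $D(A)$ to obtain the discrete Lyapunov inequality $\hat Q(u)-r^2\hat Q(A_du)\ge\|u\|^2$ (the calibration $2\xi(1+r^2)=1-r^2$ is precisely what makes the cross terms cancel), and then iterates. That route is a few lines of algebra with no special functions and fits the paper's Lyapunov-based framework; your route is longer but produces an exact double-integral identity for the $\ell^2$-sum and makes transparent that the estimate ultimately rests on the nonnegativity of the Hille--Hardy kernel.
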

	\begin{theorem}
		\label{thm:DR_CT}
		Let $(T(t))_{t\geq 0}$ be a uniformly bounded $C_0$-semigroup on a Hilbert space $H$ with generator 
		$A$ such that $i \mathbb{R} \subset \varrho(A)$.
		Suppose that 
		the Cayley transform $A_d := (I+A) (I-A)^{-1}$ is power bounded.
		If $\|T(t)(-A)^{-\alpha}\| = O(1/t)$ as $t\to \infty$
		for some $\alpha >0$,
		then $A_d$ satisfies
		\begin{align}
		\label{eq:Ad_bound1}
		\|A_d^n (-A)^{-\alpha-2} x\| = o\left(
		\frac{\log n}{n}\right)\qquad n\to \infty
		\end{align}
		for every $x \in H$ and
		\begin{align}
		\label{eq:Ad_bound2}
		\|A_d^n (-A)^{-\alpha-2} \| = O\left(
		\frac{\log n}{n}\right)\qquad n\to \infty.
		\end{align}
	\end{theorem}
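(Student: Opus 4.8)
The plan is to combine the Lyapunov-equation characterization of polynomial stability from Lemma~\ref{lem:Lyap} with the quantitative estimate of Lemma~\ref{lem:yx_bound}. First, since $\|T(t)(-A)^{-\alpha}\| = O(1/t)$ as $t \to \infty$, part~\ref{enu:poly_Lyap2} of Lemma~\ref{lem:Lyap} gives that the self-adjoint solution $Q(\xi)$ of the Lyapunov equation \eqref{eq:Lyap} satisfies
\[
\lim_{\xi \to 0+} \frac{\langle (-A)^{-\alpha/2} z, Q(\xi)(-A)^{-\alpha/2} z\rangle}{\log(1/\xi)} = 0 \qquad \forall z \in H.
\]
In Lemma~\ref{lem:yx_bound} I would apply \eqref{eq:y_tilx_bound} with $x$ replaced by $(-A)^{-\alpha/2}(I-A)^{-1}w$ for $w \in H$ (note $A_d$, $(I-A)^{-1}$, and $(-A)^{-\alpha/2}$ all commute, so the fractional power can be pulled through). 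Then, taking the supremum over $\|y\|\le 1$,
\[
(n+1)\,\big\| r^n A_d^n (I-A)^{-1}(-A)^{-\alpha/2}(I-A)^{-1} w \big\| \le \frac{M}{\sqrt{1-r^4}}\,\sqrt{\big\langle (-A)^{-\alpha/2} v,\, Q(\xi)(-A)^{-\alpha/2} v\big\rangle},
\]
where $v := (I-A)^{-1} w$ and $2\xi = (1-r^2)/(1+r^2)$.

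Next I would convert the weighted bound into the desired decay by choosing $r$ as a function of $n$. The key is that $(I-A)^{-1}(-A)^{-\alpha/2}(I-A)^{-1}$ differs from $(-A)^{-\alpha-2}$ only by a bounded, boundedly invertible factor (the operator $(-A)^2(I-A)^{-2}$ and its inverse are bounded because $0,1\in\varrho(A)$ via $i\mathbb R\subset\varrho(A)$ and sectoriality), so estimates for one transfer to the other up to constants. Choosing $r = r_n$ so that $r_n^n$ stays bounded below — concretely $1-r_n \asymp 1/n$, which forces $\xi = \xi_n \asymp 1/n$ and hence $\log(1/\xi_n) \asymp \log n$ and $1-r_n^4 \asymp 1/n$ — the right-hand side becomes
\[
\frac{M}{\sqrt{1-r_n^4}}\,\sqrt{\langle (-A)^{-\alpha/2} v, Q(\xi_n)(-A)^{-\alpha/2} v\rangle} \;\le\; C\sqrt{n}\,\sqrt{\varepsilon(n)\log n},
\]
where $\varepsilon(n)\to 0$ by the Lyapunov condition above. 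Dividing by $n+1$ and absorbing the factor $r_n^{-n} = O(1)$ on the left yields
\[
\big\| A_d^n (-A)^{-\alpha-2} w\big\| \le C' \sqrt{\frac{\varepsilon(n)\log n}{n}},
\]
which is $o(\sqrt{(\log n)/n})$ for every $w \in H$. This is a "square-root" version of \eqref{eq:Ad_bound1}; to upgrade it, apply the uniform boundedness principle to get $\|A_d^n (-A)^{-\alpha-2}\| = O(\sqrt{(\log n)/n})$, then split $A_d^n (-A)^{-2\alpha-4} = \big(A_d^{\lfloor n/2\rfloor}(-A)^{-\alpha-2}\big)\big(A_d^{n-\lfloor n/2\rfloor}(-A)^{-\alpha-2}\big)$ to obtain $\|A_d^n(-A)^{-2\alpha-4}\| = O((\log n)/n)$ and $\|A_d^n(-A)^{-2\alpha-4}x\| = o((\log n)/n)$ for all $x$. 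Finally, Lemma~\ref{lem:alpha_gamma} (with $B = A_d$, which is power bounded and commutes with $A^{-1}$, and with the nondecreasing function $f(n) = \log n$ for $n\ge 2$, suitably adjusted at $n=1$) applied with $\gamma = (\alpha+2)/(2\alpha+4) = 1/2$ converts the $(-A)^{-2\alpha-4}$ estimates into the $(-A)^{-\alpha-2}$ estimates \eqref{eq:Ad_bound1} and \eqref{eq:Ad_bound2}.

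The main obstacle I anticipate is the bookkeeping in the optimization over $r_n$: one must track how the three $n$-dependent quantities $r_n^{-n}$, $1/(1-r_n^4)$, and $\log(1/\xi_n)$ interact, and verify that the Lyapunov-equation limit genuinely produces a factor tending to zero (rather than merely a bounded factor) so that the estimate is $o$ and not just $O$ at the individual-orbit level; this is what ultimately feeds the $o$-conclusion in \eqref{eq:Ad_bound1}. A secondary technical point is justifying that $(-A)^{-\alpha/2}$ commutes with $(I-A)^{-1}$ and $A_d$ and that the "correction" operator $(-A)^2(I-A)^{-2}$ is bounded with bounded inverse — both follow from sectoriality of $-A$ and Proposition~3.1.1 of \cite{Haase2006}, but need to be stated carefully so that decay estimates transfer between $(I-A)^{-1}(-A)^{-\alpha/2}(I-A)^{-1}$ and $(-A)^{-\alpha-2}$ without loss.
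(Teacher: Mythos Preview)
Your overall strategy---combine Lemma~\ref{lem:Lyap}~\ref{enu:poly_Lyap2} with Lemma~\ref{lem:yx_bound}, choose $r=r_n$ with $1-r_n\asymp 1/n$, then invoke Lemma~\ref{lem:alpha_gamma}---is exactly the paper's route. However, two bookkeeping errors make the argument as written fail.

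First, the fractional exponent is miscounted. The operator $(I-A)^{-1}(-A)^{-\alpha/2}(I-A)^{-1}$ equals $(-A)^{-\alpha/2-2}\cdot[(-A)^2(I-A)^{-2}]$, so it is comparable to $(-A)^{-\alpha/2-2}$, \emph{not} to $(-A)^{-\alpha-2}$. In fact the extra $(I-A)^{-1}$ you insert (by taking $x=(-A)^{-\alpha/2}(I-A)^{-1}w$ rather than simply $x=(-A)^{-\alpha/2}w$) is unnecessary and costly: dropping it, Lemma~\ref{lem:yx_bound} gives, for every $z\in H$,
\[
\|A_d^n (I-A)^{-1}(-A)^{-\alpha/2} z\|=o\!\left(\sqrt{\tfrac{\log n}{n}}\right),
\]
i.e.\ an estimate for $A_d^n(-A)^{-\alpha/2-1}$, which is precisely \eqref{eq:alpha_half} in the paper. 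Second, your ``upgrade'' step is circular. Lemma~\ref{lem:alpha_gamma} scales \emph{both} the fractional power \emph{and} the decay rate by $\gamma$: from $\|A_d^n(-A)^{-2\alpha-4}\|=O((\log n)/n)$ the choice $\gamma=1/2$ only returns $\|A_d^n(-A)^{-\alpha-2}\|=O(\sqrt{(\log n)/n})$, exactly what you had before squaring. The correct application is to feed the half-exponent estimate $\|A_d^n(-A)^{-\alpha/2-1}z\|=o(((\log n)/n)^{1/2})$ into Lemma~\ref{lem:alpha_gamma}~\ref{enu:poly_dist2} as case~(ii) with $\gamma=1/2$ and implicit full exponent $\alpha+2$; the equivalence then yields \eqref{eq:Ad_bound1} directly, and \eqref{eq:Ad_bound2} follows by the uniform boundedness principle. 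No squaring step is needed.
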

	\begin{proof}
		Take $r \in (0,1)$. Let $\xi>0$ be given in \eqref{eq:sig_r}
		and $Q(\xi)\in \mathcal{L}(H)$ be the self-adjoint solution  of the Lyapunov equation \eqref{eq:Lyap}.
		By Lemma~\ref{lem:yx_bound}, 
		$Q(\xi)$  satisfies 
		\eqref{eq:y_tilx_bound}  for every $x,y \in H$ and every $n \in \mathbb{N}$.

		Let $\varepsilon >0$, $y \in H$, and $x \in D((-A)^{\alpha/2})$.
		Define $\tilde x := 
		(I-A)^{-1}x$.
		By Lemma~\ref{lem:Lyap}.\ref{enu:poly_Lyap2},
		there exists $N_0 \in \mathbb{N}$ such that 
		\[
		\frac{\langle x, Q(\xi) x\rangle }{\log(1/\xi)} \leq \varepsilon^2
		\] 
		for 
		\begin{equation}
		\label{eq:sigma_def}
		2 \xi = \frac{2n+1}{2n^2+2n+1}
		\end{equation}
		with $n \geq N_0$.
		For $\xi$ given in \eqref{eq:sigma_def},
		\[
		r = \frac{n}{n+1}
		\]
		satisfies \eqref{eq:sig_r} and 
		\[
		\frac{\log(1/\xi)}{1-r^4} \leq C_1^2 n\log n\qquad \forall n \in \mathbb{N}.
		\]
		for some constant $C_1 >0$.
		Therefore, \eqref{eq:y_tilx_bound} yields
		\begin{align*}
		|\langle 
		y, A_d^n \tilde x
		\rangle| \leq MC_1 \|y\| \varepsilon  \cdot \frac{\sqrt{n\log n}}{n+1} \left(1 + \frac{1}{n}  \right)^n
		\qquad \forall n \geq N_0.
		\end{align*}
		This implies that 
		\[
		\|A_d^n \tilde x\| = o \left(
		\sqrt{\frac{\log n}{n}}
		\right)\qquad n\to \infty.
		\]
		Since
		\[
		\{
		\tilde x = (I-A)^{-1}x: x \in D((-A)^{\alpha/2})
		\} = \ran \big((-A)^{-\alpha/2 - 1}\big),
		\]
		we obtain
		\begin{equation}
		\label{eq:alpha_half}
		\|A_d^n (-A)^{-\alpha/2 - 1} x \| = o \left(
		\sqrt{\frac{\log n}{n}}
		\right)\qquad n\to \infty
		\end{equation}
		for every $x \in H$.
		The first assertion \eqref{eq:Ad_bound1} follows from 
		Lemma~\ref{lem:alpha_gamma}.
		Applying the uniform boundedness principle to \eqref{eq:Ad_bound1},
		we obtain \eqref{eq:Ad_bound2}.
		\hfill $\Box$	
	\end{proof}
	
	\begin{remark}
		In the proof of Theorem~\ref{thm:DR_CT},
		we employ \ref{enu:poly_Lyap2} of Lemma~\ref{lem:Lyap}.
		One can apply \ref{enu:poly_Lyap1} of Lemma~\ref{lem:Lyap}
		in a similar way and consequently obtain
		\[
		\|A_d^n (-A)^{-\alpha-1/\gamma} \| = O\left(
		\frac{1}{n}\right)\qquad n\to \infty
		\]
		for every $0<\gamma<1/2$.
		However, this result is less sharp than \eqref{eq:Ad_bound2}.
	\end{remark}

	We do not know whether  the
	logarithmic factor in Theorem~\ref{thm:DR_CT} may be dropped in general.
	In some cases, however, we can omit
	the logarithmic correction,
	as in Propositions~4.1 and 4.2 in \cite{Batkai2006}.
	It is worth mentioning that we do not need the assumption on
	the power boundedness of the Cayley transform.
	\begin{proposition}
		\label{prop:normal_case}
		Consider the following two cases:
		\begin{enumerate}
			\item \label{enu:normal}
			Let $(T(t))_{t\geq 0}$ be a uniformly bounded $C_0$-semigroup  on a Hilbert space  with generator 
			$A$ such that $i \mathbb{R} \subset \varrho(A)$ and $A$ is normal.
			\item  \label{enu:MO}
			Let $\Omega$ be a locally compact Hausdorff space and $\mu$
			be a $\sigma$-finite regular Borel measure on $\Omega$. Assume that 
			either
			\begin{enumerate}
				\item $X := L^p(\Omega,\mu)$ for $1\leq p < \infty$ and 
				$\phi :\Omega \to \mathbb{C}$ is measureable with essential range in
				the open left half-plane $\mathbb{C}_-$; or that
				\item $X := C_0(\Omega)$ and $\phi:\Omega \to \mathbb{C}$
				is continuous with $\overline{\phi(\Omega)} \subset \mathbb{C}_-$.
			\end{enumerate}
			Let $A$ be the multiplication operator induced by $\phi$ on $X$, i.e,
			$Af =\phi f$ with domain $D(A) := \{f \in X:\phi f \in X\}$ and $(T(t))_{t\geq 0}$
			be the $C_0$-semigroup on $X$ generated by $A$.
		\end{enumerate}
		In both cases \ref{enu:normal} and \ref{enu:MO},
		if  $\|T(t)(-A)^{-\alpha} \| = O(1/t)$ as $t\to \infty$ for
		some $\alpha>0$,
		then the Cayley transform $A_d := (I+A)(I-A)^{-1}$ satisfies
		\begin{align}
		\label{eq:Ad_bound_normal}
		\|A_d^n (-A)^{-\alpha-2} \| = O\left(
		\frac{1}{n}\right)\qquad n\to \infty.
		\end{align}
	\end{proposition}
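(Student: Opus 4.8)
The plan is to sidestep the Lyapunov machinery of Section~\ref{sec:Lyap} and argue directly on the spectral side: in both listed situations the functional calculus identifies the relevant operator norms with scalar suprema over $\sigma(A)$. Write $\psi(\lambda):=(1+\lambda)/(1-\lambda)$, so that $A_d=\psi(A)$. In case~\ref{enu:normal}, the spectral theorem for the normal operator $A$ gives $\|g(A)\|\le\sup_{\lambda\in\sigma(A)}|g(\lambda)|$ for every bounded Borel function $g$ on $\sigma(A)$, with equality when $g$ is continuous; in case~\ref{enu:MO}, $A_d^n(-A)^{-\alpha-2}$ and $T(t)(-A)^{-\alpha}$ are the multiplication operators with symbols $(\psi\circ\phi)^n(-\phi)^{-\alpha-2}$ and $e^{t\phi}(-\phi)^{-\alpha}$, whose norms are the (essential) sup-norms of their moduli, which again equal suprema of the corresponding functions over $\sigma(A)$ since $\sigma(A)$ is the essential range of $\phi$ (resp.\ $\overline{\phi(\Omega)}$). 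Consequently, the hypothesis is equivalent to $\sup_{\lambda\in\sigma(A)}e^{t\,\re\lambda}|\lambda|^{-\alpha}=O(1/t)$ as $t\to\infty$, and to prove \eqref{eq:Ad_bound_normal} it suffices to bound $\sup_{\lambda\in\sigma(A)}|\psi(\lambda)|^{n}|\lambda|^{-\alpha-2}$ by $O(1/n)$. Two remarks: since $\psi$ maps $\mathbb{C}_-$ into the open unit disc and $\sigma(A)\subset\mathbb{C}_-$ here (for case~\ref{enu:normal}, because a uniformly bounded semigroup has $\sigma(A)\subset\overline{\mathbb{C}_-}$ and $i\mathbb{R}\subset\varrho(A)$; for case~\ref{enu:MO}, directly), $A_d$ is automatically a contraction, so power boundedness need not be assumed; and since $0\in\varrho(A)$ (for case~\ref{enu:MO}, forced by boundedness of $(-A)^{-\alpha}$), there is $\delta>0$ with $|\lambda|\ge\delta$ for all $\lambda\in\sigma(A)$.

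First I would extract a geometric consequence of the decay hypothesis. Writing $\lambda=-s+i\omega$ with $s>0$ and $\rho:=|\lambda|\ge\delta$, the hypothesis gives constants $M,t_0>0$ with $t\,e^{-st}\le M\rho^{\alpha}$ for all $t\ge t_0$ and all $\lambda\in\sigma(A)$; optimizing the left-hand side over $t\ge t_0$ (the maximizer is $t=1/s$ when $s\le1/t_0$) yields a uniform lower bound
\[
s\ \ge\ c_1\min\{1,\rho^{-\alpha}\}\qquad(\lambda\in\sigma(A))
\]
for some $c_1>0$ depending only on $M$ and $t_0$. Next, the elementary identity
\[
|\psi(\lambda)|^{2}=1-\frac{4s}{(1+s)^{2}+\omega^{2}}\ \le\ \exp\!\Bigl(-\frac{4s}{1+2s+\rho^{2}}\Bigr),
\]
where $s^{2}+\omega^{2}=\rho^{2}$, combined with the lower bound on $s$, gives $|\psi(\lambda)|^{2n}\le\exp(-c_1 n/\rho^{\alpha+2})$ when $\rho\ge1$ (using $1+2s+\rho^{2}\le4\rho^{2}$) and $|\psi(\lambda)|^{2n}\le\exp(-c_1 n)$ when $\delta\le\rho<1$ (using $s\ge c_1$ and $1+2s+\rho^{2}<4$).

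The proof then closes by a one-variable estimate. For $\rho\ge1$, with $v:=n/\rho^{\alpha+2}$,
\[
n^{2}\,|\psi(\lambda)|^{2n}\,\rho^{-2\alpha-4}\ \le\ v^{2}e^{-c_1 v}\ \le\ \sup_{v>0}v^{2}e^{-c_1 v}<\infty,
\]
so $|\psi(\lambda)|^{n}\rho^{-\alpha-2}=O(1/n)$ uniformly over that range; for $\delta\le\rho<1$, $n^{2}|\psi(\lambda)|^{2n}\rho^{-2\alpha-4}\le n^{2}e^{-c_1 n}\delta^{-2\alpha-4}$ is bounded in $n$, again giving $|\psi(\lambda)|^{n}\rho^{-\alpha-2}=O(1/n)$. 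Taking the supremum over $\sigma(A)$ and using the functional-calculus bound of the first step yields \eqref{eq:Ad_bound_normal}. No discrete normalization (Lemma~\ref{lem:alpha_gamma}) is needed, as this computation already produces the exact power $-\alpha-2$; the argument parallels that of Propositions~4.1 and 4.2 of \cite{Batkai2006}.

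The conceptual core is the first step---recognizing that for normal generators and multiplication operators the functional calculus converts both the hypothesis and the desired conclusion into uniform scalar estimates on $\sigma(A)$---after which everything is explicit. I expect the only genuinely delicate bookkeeping to be the matching of exponents across the two regimes $\rho\ge1$ and $\delta\le\rho<1$, so that the exponential factor $|\psi(\lambda)|^{n}$ absorbs both the factor $n$ and the polynomial weight $\rho^{-\alpha-2}$; this is precisely where the exponent $\alpha+2$, and not anything larger, is forced. A minor point to keep in mind is that for $L^{p}(\Omega,\mu)$ with $p\neq2$ and for $C_{0}(\Omega)$ one only has the inequality ``operator norm $\le$ sup-norm of the symbol over $\sigma(A)$'', but that one-sided bound is all the statement requires.
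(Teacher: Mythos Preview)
Your proof is correct and follows essentially the same approach as the paper: reduce via functional calculus to a scalar supremum over $\sigma(A)$, extract the geometric spectral bound implied by polynomial decay, and use the identity for $|\psi(\lambda)|^{2}=1-4s/(1+2s+\rho^{2})$ to control $n|\psi(\lambda)|^{n}|\lambda|^{-\alpha-2}$ uniformly. The only cosmetic differences are that the paper cites Propositions~4.1--4.2 of \cite{Batkai2006} for the spectral characterization rather than deriving $s\ge c_{1}\min\{1,\rho^{-\alpha}\}$ inline, splits $\sigma(-A)$ by $\re\lambda\le\delta$ rather than by $|\lambda|\ge1$, and optimizes $(n/s)(1-C_{2}/s)^{n/2}$ directly instead of passing through $1-x\le e^{-x}$; incidentally, for multiplication operators on $L^{p}$ and $C_{0}$ the operator norm \emph{equals} the (essential) sup of the symbol, so your closing caveat is unnecessary---and you do use that equality when translating the hypothesis.
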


	\begin{proof}
		The proof is divided up into two steps.
		In the first step, we characterize the norm of $A_d^n (-A)^{-\alpha}$ 
		for $n \in \mathbb{N}$ by the spectrum of $A$.
		In the second step, we obtain 
		the decay estimate \eqref{eq:Ad_bound_normal}
		from this characterization and the geometrical condition on
		$\sigma(-A)$ for polynomial decay given in Propositions~4.1 and 4.2 of
		\cite{Batkai2006}.
		
		{\em Step~1:}
		First, we consider the case \ref{enu:normal}.
		Define 
		\begin{equation}
		\label{eq:f_n_def}
		f_n(\lambda) := 
		\left( 
		\frac{1-\lambda}{1+\lambda}
		\right)^{n}\lambda^{-\alpha-2}
		\end{equation} for  $\lambda \in \mathbb{C} \setminus (-\infty,0]$ 
		and $n \in\mathbb{N}$. Then $A_d^n (-A)^{-\alpha-2} = f_n(-A)$
		by the product formula of functional calculi (see, e.g.,	
		Theorem~1.3.2.c) of \cite{Haase2006}).
		The spectral mapping theorem 
		(see, e.g., Theorem~2.7.8 of \cite{Haase2006})
		shows that 
		\[
		\sigma(f_n(-A)) = f_n(\tilde \sigma(-A)),
		\]
		where $f_n(\infty) := \lim_{\lambda \to 0}f_n(1/\lambda) = 0$.
		Since $A$ is normal, we see that 
		$f_n(-A)$ is also normal, by using 
		a multiplication operator unitarily equivalent to $A$ 
		obtained from the spectral 
		theorem; see, for example, Theorem~4.1 of \cite{Haase2018} for 
		the multiplicator version of
		the spectral theorem for unbounded normal operators.
		Moreover, $f_n(-A)$ is bounded. Hence
		the spectral radius of $f_n(-A)$ equals $\|f_n(-A)\|$; see, e.g., 
		Theorem~5.44 of \cite{Weidmann1980}.
		This yields
		\begin{equation}
		\label{eq:SMT}
		\|A_d^n (-A)^{-\alpha-2}\| = 
		\|f_n(-A)\| = \sup_{\lambda \in \sigma(f_n(-A))} |\lambda| =
		\sup_{\lambda \in \tilde \sigma(-A)} |f_n(\lambda)|.
		\end{equation}
		
		A result similar to \eqref{eq:SMT} is obtained in the case 
		\ref{enu:MO}. 
		Let $h_{\text{ess}}(\Omega)$ denote the essential range of 
		a measurable function $h:\Omega \to \mathbb{C}$.
		Define the function $f_n$ as in \eqref{eq:f_n_def}.
		In the $L^p$-case (i),
		\begin{align*}
		\|A_d^n(-A)^{-\alpha-2}\| &= 
		\sup \big\{
		|z| : z \in \big(f_n \circ (-\phi)\big)_{\text{ess}}(\Omega)
		\big\}  \\
		\sigma(A) &= \phi_{\text{ess}}(\Omega)
		\end{align*}
		by Proposition~I.4.10 of \cite{Engel2000}.
		Moreover, one can obtain
		\[
		\sup\big\{|z| : z \in \big(f_n \circ (-\phi)\big)_{\text{ess}}(\Omega) \big\} = 
		\sup \{
		|f_n(\lambda)| : -\lambda \in \phi_{\text{ess}}(\Omega)
		\}.
		\]
		Therefore,
		\begin{equation}
		\label{eq:LpC0_norm}
		\|A_d^n (-A)^{-\alpha-2}\| = \sup_{\lambda \in \sigma(-A)}|f_n(\lambda)|.
		\end{equation}
		In the $C_0$-case (ii), we also obtain \eqref{eq:LpC0_norm}, since 
		\begin{align*}
		\|A_d^n(-A)^{-\alpha-2}\| &=
		\sup_{s \in \Omega} \big|\big(f_n \circ (-\phi)\big)(s)\big| \\
		\sigma(A) &= \overline{\phi(\Omega)}
		\end{align*}
		by Proposition~I.4.2 of \cite{Engel2000}.

		{\em Step~2:}
		Propositions~4.1 and 4.2 of \cite{Batkai2006} show that 
		in both cases \ref{enu:normal} and \ref{enu:MO},
		$\|T(t)(-A)^{-\alpha}\| = O(1/t)$ as $t\to \infty$ if and only if
		there exist $\delta,C >0$ such that 
		$|\im \lambda| \geq C (\re \lambda)^{-1/\alpha}$ for every $\lambda \in 
		\sigma(-A)$ with $\re \lambda \leq \delta$.
		Define
		\begin{align*}
		\Omega_1 := 
		\{
		\lambda \in \sigma(-A):
		\re \lambda \leq \delta
		\},\quad 
		\Omega_2 := \sigma(-A) \setminus \Omega_1.
		\end{align*}
		In what follows,
		we estimate $|n f_n(\lambda)|$ for $\lambda \in \sigma(-A)$, by
		dividing the argument into two cases: $\lambda \in \Omega_1$
		and $\lambda \in \Omega_2$.
		To this end, we use the identities
		\begin{equation}
		\label{eq:1pmlambda}
		\left|
		\frac{1-\lambda}{1+\lambda}
		\right|^n = 
		\left(
		\frac{(1-\re \lambda)^2 + |\im \lambda|^2}
		{(1+\re \lambda)^2 + |\im \lambda|^2}
		\right)^{\frac{n}{2}} = 
		\left(
		1 - \frac{4 \re \lambda}{|1+\lambda|^2}
		\right)^{\frac{n}{2}}
		\end{equation}
		for $\lambda \in \sigma(-A)$.
		
		First, we consider the case $\lambda \in \Omega_1 \subset
		\{
		s \in \mathbb{C}: |\im \lambda| \geq C (\re \lambda)^{-1/\alpha}
		\}$.
		Then
		\[
		1 \geq  
		\frac{4 \re \lambda}{|1+\lambda|^2} \geq 
		\frac{
			\frac{4C^{\alpha}}{|\im \lambda|^{\alpha}}
		}{|1+\lambda|^2} \geq
		\frac{4C^{\alpha}}{C_1|\im \lambda|^{\alpha+2}} > 0
		\]
		for some $C_1 \geq 1$. Set $C_2 := 4C^{\alpha}/C_1$.
		Then
		\[
		n|f_n(\lambda)| \leq 
		\frac{n}{|\im \lambda|^{\alpha+2}}
		\left(
		1 -	\frac{C_2}{|\im \lambda|^{\alpha+2}}
		\right)^{\frac{n}{2}} 
		\]
		for all $n \in \mathbb{N}$.
		Define 
		\[
		g_n(s) := \frac{n}{s} \left( 
		1 - \frac{C_2}{s}
		\right)^{\frac{n}{2}}
		\]
		for $s \geq C_2$ and $n \in \mathbb{N}$.
		Then
		\[
		g_n'(s) =
		\frac{
			n\left(
			1 - \frac{C_2}{s}
			\right)^{\frac{n}{2}-1} (C_2(n+2) - 2s)
		}{2s^3}\qquad \forall s > C_2.
		\]
		Hence $g_n$ takes the maximum value at $s = s^* :=\frac{C_2(n+2)}{2}$
		on $[C_2,\infty)$ and 
		\[
		g_n(s^*) = \frac{2n}{C_2(n+2)}
		\left(
		1 - \frac{2}{n+2}
		\right)^{\frac{n}{2}} \to \frac{2}{eC_2}\qquad n \to \infty.
		\]
		This means that 
		\begin{equation}
		\label{eq:bound1_normal}
		\sup_{n \in \mathbb{N}} \sup_{\lambda \in \Omega_1}n|f_n (\lambda)| < \infty.
		\end{equation}

		Next we investigate the case $\lambda \in \Omega_2$.
		We have that 
		\begin{align*}
		1 \geq 
		\frac{4 \re \lambda}{|1+\lambda|^2} \geq
		\frac{4\delta}{(1+|\lambda|)^2} \geq 
		\frac{4\delta}{(1+1/\delta)^2 |\lambda|^2} >0.
		\end{align*}
		Set $C_3 := 4\delta / (1+1/\delta)^2$.
		Then
		\[
		n|f_n(\lambda)| \leq 
		\frac{1}{\delta^{\alpha}}
		\frac{n}{|\lambda|^{2}}
		\left(
		1 -	\frac{C_3}{|\lambda|^{2}}
		\right)^{\frac{n}{2}}  .
		\]
		Similarly to the case $\lambda \in \Omega_1$, we obtain
		\begin{equation}
		\label{eq:bound2_normal}
		\sup_{n \in \mathbb{N}}\sup_{\lambda \in \Omega_2} n|f_n (\lambda)| < \infty.
		\end{equation}
		Combining  \eqref{eq:bound1_normal} and \eqref{eq:bound2_normal},
		we have that 
		\[
		\sup_{n \in \mathbb{N}} n\|A_d^n (-A)^{-\alpha-2}\| < \infty.
		\]
		Thus, the desired conclusion \eqref{eq:Ad_bound_normal} holds.
		\hfill $\Box$	
	\end{proof}
	
	\begin{example}
		There exists a normal operator $A$ such that 
		the norm estimate of Cayley transforms 
		in Proposition~\ref{prop:normal_case} 
		cannot be improved.
		Consider a diagonal operator $A: D(A) \subset \ell^2(\mathbb{C})
		\to \ell^2(\mathbb{C})$  defined by
		\[
		Ax := \sum_{k \in \mathbb{N}} \left(
		-\frac{1}{k} + ik
		\right) \langle x,e_k \rangle e_k
		\]
		with domain
		\[
		D(A) := 
		\left\{
		x \in \ell^2(\mathbb{C}) : \sum_{k \in \mathbb{N}} 
		k^2 |\langle x,e_k \rangle |^2 < \infty
		\right\},
		\]
		where  $(e_k)_{k \in \mathbb{N}}$ is the 
		standard basis of $ \ell^2(\mathbb{C})$.
		The semigroup $(T(t))_{t\geq 0}$ generated by $A$
		satisfies $\|T(t)A^{-1}\| = O(1/t)$ as $t\to \infty$ 
		by Proposition~4.1 of \cite{Batkai2006}.
		We see from Proposition~\ref{prop:normal_case}  that 
		\begin{equation}
		\label{eq:Ad_decay_ex1}
		\|A_d^n A^{-3}\| = O\left(\frac{1}{n} \right)\qquad n \to \infty.
		\end{equation}
		The Cayley transform $A_d$ is given by
		\[
		A_dx = \sum_{k \in \mathbb{N}} \frac{k-1+ik^2}{k+1-ik^2}
		\langle x ,e_k \rangle e_k\qquad \forall x \in X.
		\]
		Since
		\[
		\left|
		\frac{k-1+ik^2}{k+1-ik^2}
		\right|^2 = \frac{(k-1)^2+k^4}{(k+1)^2+k^4}\leq 1
		\]
		for all $k \in \mathbb{N}$,
		it follows that $A_d$ is power bounded.
		By
		Lemma~\ref{lem:alpha_gamma}.\ref{enu:poly_dist1},
		the norm estimate \eqref{eq:Ad_decay_ex1} is equivalent to
		\begin{equation}
		\label{eq:Ad_decay_ex2}
		\|A_d^n A^{-1}\| = O\left(\frac{1}{n^{1/3}} \right)\qquad n \to \infty.
		\end{equation}

		Define $\lambda_k := 1/k - ik \in \sigma(-A)$ for $k \in \mathbb{N}$,
		and take $m \in \mathbb{N}$.
		Then 
		\[
		\|A_d^{mn^3} A^{-3} \| =
		\sup_{k \in \mathbb{N}}
		\left|
		\frac{1-\lambda_k}{1 + \lambda_k}
		\right|^{mn^3} |\lambda_k|^{-3}
		\geq 
		\left|
		\frac{1-\lambda_n}{1 + \lambda_n}
		\right|^{mn^3} |\lambda_n|^{-3}.
		\]
		for every $n \in \mathbb{N}$. 
		We have  that 
		\begin{align*}
		n^3 \left|
		\frac{1-\lambda_n}{1 + \lambda_n}
		\right|^{mn^3} |\lambda_n|^{-3} &=
		\frac{n^3}{\left(
			\frac{1}{n^2} + n^2
			\right)^{\frac{3}{2}}}
		\left(
		1- \frac{4}{n^3+n+2+1/n}
		\right)^{\frac{mn^3}{2}} .
		\end{align*} 
		Since
		\[
		\left(
		1- \frac{4}{n^3+n+2+1/n}
		\right)^{\frac{mn^3}{2}} \to e^{-2m} \qquad n \to \infty,
		\]
		it follows that 
		\begin{equation}
		\label{eq:Ad_decay_liminf}
		\liminf_{n \to \infty} n^3  \|A_d^{mn^3} A^{-3} \| \geq e^{-2m}.
		\end{equation}
		By
		\eqref{eq:Ad_decay_liminf} with $m=1$,
		the norm estimate \eqref{eq:Ad_decay_ex1} 
		is optimal
		in the sense that 
		$\limsup_{n \to \infty} n \|A_d^n A^{-3}\| >0$.
		The norm estimate
		\eqref{eq:Ad_decay_ex2} and 
		the substitution of $m=3$ into
		\eqref{eq:Ad_decay_liminf} imply that 
		the optimal decay rate of $\|A_d^n A^{-1}\|$ is $1/n^{1/3}$.
	\end{example}
	
	\section{Robustness analysis of polynomial stability}
	\label{sec:robustness}
	As another application of the argument
	based on Lyapunov equations established in Section~\ref{sec:Lyap},
	we here extend to the case of polynomial stability
	the following result (Lemma~2.6 of \cite{Guo2006}) on the preservation
	of uniform boundedness.
	\begin{lemma}
		\label{lem:perturb_bounded}
		Let 
		$A$ be the generator of a uniformly bounded $C_0$-semigroup  on a Hilbert space $H$. 
		If $0 \in \varrho(A)$, then
		$A+rA^{-1}$ also generates a uniformly bounded $C_0$-semigroup
		semigroup on $H$ for every $r \geq 0$.
	\end{lemma}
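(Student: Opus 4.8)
The plan is to reduce the claim to the resolvent characterisation of generators of bounded $C_0$-semigroups on Hilbert spaces in \cite{Gomilko1999,Shi2000}. Since $rA^{-1}\in\mathcal{L}(H)$, the operator $B:=A+rA^{-1}$ with $D(B)=D(A)$ already generates a $C_0$-semigroup, so only uniform boundedness is at stake, and by the criterion of \cite{Gomilko1999,Shi2000} it suffices to show that the open right half-plane lies in $\varrho(B)$ and that
\[
\sup_{\xi>0}\xi\int_{-\infty}^{\infty}\big(\|R(\xi+i\eta,B)x\|^2+\|R(\xi+i\eta,B^*)x\|^2\big)\,d\eta<\infty\qquad(x\in H).
\]
Because $B^*=A^*+r(A^*)^{-1}$ and $A^*$ generates a bounded $C_0$-semigroup with $0\in\varrho(A^*)$, the $B^*$-part is literally the $B$-part applied to $A^*$, so I treat only $B$; the case $r=0$ is trivial, so assume $r>0$. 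Throughout, $M:=\sup_{t\ge0}\|T(t)\|$, and I use the Plancherel identity $\frac{1}{2\pi}\int_{-\infty}^{\infty}\|R(\xi+i\eta,A)x\|^2\,d\eta=\int_0^{\infty}e^{-2\xi t}\|T(t)x\|^2\,dt\le M^2\|x\|^2/(2\xi)$, so that $\xi\int_{\mathbb{R}}\|R(\xi+i\eta,A)x\|^2\,d\eta$ is bounded uniformly in $\xi$.

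The algebraic input is a factorisation of $R(\mu,B)$. For $\re\mu=\xi>0$ let $\lambda_1(\mu),\lambda_2(\mu)$ be the roots of $\lambda^2-\mu\lambda+r=0$, labelled so that $|\lambda_1|\ge|\lambda_2|$; then $\lambda_1+\lambda_2=\mu$, $\lambda_1\lambda_2=r$, $\lambda_j+r/\lambda_j=\mu$, $\re\lambda_j=\xi|\lambda_j|^2/(|\lambda_j|^2+r)$, and $\lambda_2/\lambda_1=r/\lambda_1^{2}$, $\lambda_1/\lambda_2=\lambda_1^{2}/r$. Writing $\lambda_j=|\lambda_j|e^{i\theta_j}$, the relation $\lambda_1\lambda_2=r>0$ forces $\theta_2=-\theta_1$, and $\re\mu=(|\lambda_1|+|\lambda_2|)\cos\theta_1>0$ then gives $\re\lambda_j(\mu)>0$; since $A$ generates a bounded semigroup we have $\sigma(A)\subset\overline{\mathbb{C}_-}$, hence $\lambda_j(\mu)\in\varrho(A)$, the factorisation $\mu-B=-A^{-1}(A-\lambda_1)(A-\lambda_2)$ holds, and a direct computation (checking that the bounded operator on the right is a two-sided inverse of $\mu-B$) yields $\mu\in\varrho(B)$ together with
\[
R(\mu,B)=R(\lambda_2,A)-\lambda_1R(\lambda_1,A)R(\lambda_2,A)=\frac{\lambda_1R(\lambda_1,A)-\lambda_2R(\lambda_2,A)}{\lambda_1-\lambda_2},
\]
the last equality (when $\lambda_1\neq\lambda_2$) by the resolvent identity. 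Note also $|\lambda_1|\ge\sqrt r$, so $\re\lambda_1\in[\xi/2,\xi]$ and $\re\lambda_2\in(0,\xi/2]$ for every such $\mu$.

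It remains to bound $\int_{\mathbb{R}}\|R(\xi+i\eta,B)x\|^2\,d\eta$ by $O(1/\xi)$ uniformly. Fix a large constant $R$ (depending only on $r$, $\|A^{-1}\|$, and the radius of a ball about $0$ inside $\varrho(A)$) and split the line into $|\eta|>R$ and $|\eta|\le R$. The range of $\xi$ bounded away from $0$ is routine: for large $\xi$ a Neumann series gives $\|R(\xi+i\eta,B)x\|\le2\|R(\xi+i\eta,A)x\|$ and one applies the Plancherel bound, while for $\xi$ in a compact subinterval of $(0,\infty)$ the set $\{\xi+i\eta:|\eta|\le R\}$ is compact in $\varrho(B)$ and the tail is controlled as below; hence I may assume $0<\xi<\xi_0$ for a suitable $\xi_0=\xi_0(r)$. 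On $|\eta|>R$ we have $|\lambda_1-\mu|=|\lambda_2|\le\sqrt r$, $|\lambda_1/(\lambda_1-\lambda_2)|$ bounded, $|\lambda_2/(\lambda_1-\lambda_2)|=O(1/|\eta|)$, and $\lambda_2$ in the ball about $0$ where $R(\cdot,A)$ is bounded; the $\lambda_2$-term therefore contributes a $\xi$-independent constant, and the $\lambda_1$-term is treated by the change of variables $\eta\mapsto s:=\im\lambda_1(\xi+i\eta)$ — whose Jacobian is bounded because $\re\frac{d\lambda_1}{d\mu}=\re\frac{1}{1-r/\lambda_1^2}\ge\frac14$ there — together with the elementary estimate $\|R(\re\lambda_1+is,A)x\|\le(1+M)\|R(\xi+is,A)x\|$ (resolvent identity, using $\re\lambda_1\in[\xi/2,\xi]$), reducing it to $\int_{\mathbb{R}}\|R(\xi+is,A)x\|^2\,ds=O(1/\xi)$. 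On $|\eta|\le R$, for $\xi<\xi_0$ the roots stay in a fixed compact subset of $\{\re\lambda\ge0\}$ with $|\lambda_1-\lambda_2|$ bounded below and $\re\lambda_j$ comparable to $\xi$, and since $|\lambda_j|$ is bounded below while $\re\lambda_j\le\xi$, smallness of $\xi$ forces $\re(\lambda_1^{2})=(\re\lambda_1)^2-(\im\lambda_1)^2<0$, hence $\re(r/\lambda_1^{2})<0$ and $\re(\lambda_1^{2}/r)<0$, so the Jacobians of $\eta\mapsto\im\lambda_1$ and $\eta\mapsto\im\lambda_2$ are bounded below; applying again the resolvent-identity estimate and the Plancherel bound for $A$ gives an $O(1/\xi)$ bound here too. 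Summing the three contributions yields $\sup_{\xi>0}\xi\int_{\mathbb{R}}\|R(\xi+i\eta,B)x\|^2\,d\eta<\infty$, and likewise for $B^*$, so \cite{Gomilko1999,Shi2000} applies and $B$ generates a bounded $C_0$-semigroup.

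The step I expect to be the main obstacle is the region $|\eta|\le R$ with $\xi\to0+$: there $R(\mu,B)$ genuinely grows like $1/\xi$, because both $\lambda_1(\mu)$ and $\lambda_2(\mu)$ tend to the imaginary axis, along which the only available resolvent bound for $A$ is $\|R(\lambda,A)\|\le M/\re\lambda$, so a pointwise estimate only yields $O(1/\xi^2)$ for the $\eta$-integral. Recovering the borderline rate $O(1/\xi)$ forces one to transport the $\eta$-integral onto a vertical line via the substitution $\eta\mapsto\im\lambda_j$, and the crux is to show this substitution is non-degenerate uniformly in $\xi$; this is exactly where the identities $\lambda_2/\lambda_1=r/\lambda_1^{2}$, $\lambda_1/\lambda_2=\lambda_1^{2}/r$ together with the bound $\re\lambda_1\le\xi$ are used, via lower bounds on $\re\frac{d\lambda_1}{d\mu}$ and $\re\frac{d\lambda_2}{d\mu}$.
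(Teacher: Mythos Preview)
Your argument is correct: the factorisation $\mu-B=-A^{-1}(A-\lambda_1)(A-\lambda_2)$, the sign analysis showing $\re\lambda_j>0$, the two resolvent representations of $R(\mu,B)$, the splitting into $|\eta|>R$ and $|\eta|\le R$, and the changes of variable $\eta\mapsto\im\lambda_j$ with Jacobians $\re\big(d\lambda_j/d\mu\big)=\re\big(1/(1-\lambda_{3-j}/\lambda_j)\big)$ bounded below all check out. The one place where the write-up is compressed is the verification that $\re\lambda_2\ge c\xi$ on $|\eta|\le R$ (needed so that the resolvent-identity comparison for the $\lambda_2$-term gives a constant independent of $\xi$); this follows from $\re\lambda_2=\xi|\lambda_2|^2/(|\lambda_2|^2+r)$ together with the lower bound on $|\lambda_2|=r/|\lambda_1|$ coming from the upper bound on $|\lambda_1|$ in that region, and you should state it explicitly.

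The paper does not prove this lemma but quotes it from Guo--Zwart \cite[Lemma~2.6]{Guo2006}, and that proof proceeds quite differently, via Lyapunov equations: one checks (this is exactly the content of Lemma~\ref{lem:Q1_Lyap}) that the operator $Q(\xi/(1+\kappa))$, which solves the Lyapunov equation for $A$, already satisfies the Lyapunov \emph{inequality} $(A+A^{-1}-\xi I)^*Q_1+Q_1(A+A^{-1}-\xi I)\le -I$ for $A+A^{-1}$. From that single inequality one gets in one line (cf.\ \cite[Remark~2.3]{Guo2006}, as quoted in \eqref{eq:resol_int_bound}) the bound $\int_{\mathbb{R}}\|R(\xi+i\eta,A+A^{-1})x\|^2\,d\eta\le 2\pi\langle x,Q(\xi/(1+\kappa))x\rangle$, and uniform boundedness of the right-hand side times $\xi$ is just the Plancherel estimate for $(T(t))_{t\ge0}$. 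Both routes finish with the Gomilko--Shi--Feng criterion, but the Lyapunov route bypasses all of the root analysis and the two changes of variable: it never needs to look at $\lambda_1,\lambda_2$ at all. Your approach, on the other hand, is entirely self-contained at the resolvent level and makes the mechanism of the factorisation visible; it would also generalise more readily to perturbations of the form $p(A^{-1})$ for other polynomials $p$, where the Lyapunov inequality of Lemma~\ref{lem:Q1_Lyap} is less obvious.
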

	
	This robustness result can be extended to the case of 
	strong stability and exponential stability; see p.~358 of \cite{Guo2006}.
	
	It turns out that 
	the class of perturbations $\{rA^{-1}:r>0\}$
	results in at most only arbitrarily small loss  of decay rates.
	\begin{proposition}
		\label{prop:perturbation}
		Let 
		$A$ be the generator of a 
		uniformly bounded $C_0$-semigroup
		$(T(t))_{t\geq 0}$ on a Hilbert space $H$ 
		such that $i \mathbb{R} \subset \varrho(A)$.
		If
		$\|T(t)(-A)^{-\alpha}\| = O(1/t)$ as $t\to \infty$ for some $\alpha >0$, then 
		the following assertions hold for every $r \geq 0$:
		\begin{enumerate}
			\item $A+rA^{-1}$ generates a uniformly bounded $C_0$-semigroup 
			$(S_r(t))_{t\geq 0}$ on $H$;
			\item $i \mathbb{R} \subset \varrho(A+r A^{-1})$; and
			\item 
			for every $\varepsilon \in (0,1)$,
			\begin{equation}
			\label{eq:Sr_conv}
			\|S_r(t) (-A-rA^{-1})^{-\alpha}\| = 
			\begin{cases}
			O (1/t) & \text{if $\alpha>2$} \\
			O(\log t/ t) & \text{if $\alpha = 2$} \\
			O(1/t^{1-\varepsilon}) & \text{if $0<\alpha<2$}
			\end{cases}
			\end{equation}
			as $t\to \infty$.
		\end{enumerate}
	\end{proposition}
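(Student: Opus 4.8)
The plan is to prove (a) and (b) by an explicit factorisation of $z-(A+rA^{-1})$ through the generator $A$, and to prove (c) by running the Lyapunov/resolvent machinery of Section~\ref{sec:Lyap} for the perturbed generator $A_r:=A+rA^{-1}$. First I would dispose of $r=0$ (where $A_r=A$) and assume $r>0$, so that $rA^{-1}\in\mathcal{L}(H)$ and Lemma~\ref{lem:perturb_bounded} yields assertion (a): $A_r$ generates a uniformly bounded $C_0$-semigroup $(S_r(t))_{t\ge0}$. For (b) I would use the identity
\[
z-A_r \;=\; -A^{-1}\bigl(A^2-zA+rI\bigr)\;=\;-A^{-1}\bigl(A-\mu_+(z)\bigr)\bigl(A-\mu_-(z)\bigr)\qquad\text{on }D(A^2),
\]
where $\mu_\pm(z)=\tfrac12(z\pm\sqrt{z^2-4r})$ are the roots of $\mu^2-z\mu+r=0$, so $\mu_+\mu_-=r$ and $\mu_++\mu_-=z$. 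When $\re z>0$ one checks $\re\mu_\pm(z)>0$, and $\mu_\pm(i\eta)\in i\mathbb{R}$; since $\sigma(A)\subset\mathbb{C}_-$ and $i\mathbb{R}\subset\varrho(A)$, in all of these cases $\mu_\pm(z)\in\varrho(A)$, and inverting the identity gives $z\in\varrho(A_r)$ together with
\[
R(z,A_r)=\frac{\mu_+(z)\,R(\mu_+(z),A)-\mu_-(z)\,R(\mu_-(z),A)}{\mu_+(z)-\mu_-(z)}\qquad(\mu_+(z)\ne\mu_-(z)).
\]
In particular $i\mathbb{R}\subset\varrho(A_r)$, which is (b); moreover $\sigma(A_r)\subset\mathbb{C}_-$, so $-A_r$ is sectorial and $(-A_r)^{-\theta}$ is well defined.

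For (c) I would run the argument of Section~\ref{sec:Lyap} for $A_r$. Writing $\phi(\mu)=\mu+r/\mu$, note $-A_r=\phi(-A)$, and for integer $k$ one has $(-A_r)^{-k}=(-A)^{-k}B_r^{-k}$ with $B_r:=I+r(-A)^{-2}\in\mathcal{L}(H)$ boundedly invertible (since $\pm i\sqrt r\in\varrho(A)$ and $\sigma(A)$ is bounded away from $i[-\sqrt r,\sqrt r]$) and commuting with $A$; hence the integer fractional-domain spaces of $-A_r$ and $-A$ coincide. By Lemma~\ref{lem:Lyap} applied to $A_r$ it then suffices to control, for a suitable $\gamma$, the quantity $\xi^{1-2\gamma}\langle(-A_r)^{-\alpha\gamma}x,Q_r(\xi)(-A_r)^{-\alpha\gamma}x\rangle=\tfrac{\xi^{1-2\gamma}}{2\pi}\int_{-\infty}^{\infty}\|R(\xi+i\eta,A_r)(-A_r)^{-\alpha\gamma}x\|^2\,d\eta$ and its $\gamma=1/2$ analogue. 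I would insert the formula for $R(\xi+i\eta,A_r)$ and split the $\eta$-integral: for $|\eta|\le\eta_0$ both roots $\mu_\pm(\xi+i\eta)$ stay in a fixed compact subset of $\varrho(A)$, so this part is $O(1)$; for $|\eta|>\eta_0$ one root is the ``large'' root $\mu_L\approx\xi+i\eta$ with $\re\mu_L\asymp\xi$, the other the ``small'' root $\mu_S\approx r/(\xi+i\eta)\to0$. In the $\mu_L$-term I would change variables to $\nu=\im\mu_L$ (the Jacobian is $\approx1$), use the resolvent identity and $\|R(\lambda,A)\|\le M/\re\lambda$ to dominate $\|R(\mu_L,A)\,\cdot\,\|$ by $\|R(\tfrac\xi2+i\nu,A)\,\cdot\,\|$, and bring in Proposition~\ref{prop:decay_to_integral} to conclude that it contributes $o(\xi^{-(1-2\gamma)})$; in the $\mu_S$-term the coefficient $|\mu_S/(\mu_L-\mu_S)|^2=O(\eta^{-4})$ is integrable while $\|R(\mu_S,A)\|$ stays bounded near $0\in\varrho(A)$, so it is $O(1)$ as well. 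This is the scheme; the refinements for $\alpha=2$ (where one is pushed to the critical exponent $\gamma=1/2$, whence the logarithm via Lemma~\ref{lem:Lyap}(\ref{enu:poly_Lyap3})) and $0<\alpha<2$ (where one normalises with Lemma~\ref{lem:frac_normalize1}, producing the $\varepsilon$-loss) are where the price is paid.

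The main obstacle I expect is not the resolvent algebra but the passage from an estimate for $\langle(-A)^{-\alpha\gamma}y,Q_r(\xi)(-A)^{-\alpha\gamma}y\rangle$ back to a statement about $(-A_r)^{-\alpha}$: for non-integer exponents the clean identity $(-A_r)^{-\theta}=(-A)^{-\theta}B_r^{-\theta}$ need not hold, because $-A$ is only known to be sectorial of angle $\pi/2$ whereas the branch points of $\mu\mapsto\phi(\mu)^{-\theta}$ sit at $\pm i\sqrt r$, i.e.\ in the interior of any sector of angle $>\pi/2$. One is therefore forced to detour through integer powers (or through $(-A)^{-\alpha-2}$, as in Section~\ref{sec:Decay_CT}) and then normalise, and the two powers of $(-A)^{-1}$ lost in this detour are exactly what makes the exponent $\alpha$ enter asymmetrically and force the threshold $\alpha=2$ in \eqref{eq:Sr_conv}. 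A secondary point to watch is that the change of variables $\eta\mapsto\im\mu_L(\xi+i\eta)$ on $|\eta|>\eta_0$ be done with $\re\mu_L$ kept comparable to $\xi$ rather than merely positive, so that the resulting $L^2(d\nu)$ integral of $R(\cdot,A)$ can be majorised by the one controlled in Proposition~\ref{prop:decay_to_integral}.
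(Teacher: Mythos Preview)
Your treatment of (a) and (b) via the factorisation $z-A_r=-A^{-1}(A-\mu_+(z))(A-\mu_-(z))$ is essentially the paper's Lemma~\ref{lem:A_Ainv_resol} (combined with Lemma~\ref{lem:perturb_bounded}), just carried out for general $r>0$ rather than after the rescaling $A\mapsto r^{-1/2}A$ that the paper uses to reduce to $r=1$.

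For (c) you take a genuinely different route. The paper never estimates the resolvent integral of $A_r$ directly. Instead it invokes a Lyapunov-inequality shortcut due to Guo--Zwart (Lemma~\ref{lem:Q1_Lyap}): if $Q(\xi)$ is the Lyapunov solution for $A$, then $Q_1(\xi):=Q(\xi/(1+\kappa))$ satisfies the Lyapunov \emph{inequality} for $A+A^{-1}$, which immediately gives
\[
\frac{1}{2\pi}\int_{-\infty}^{\infty}\|R(\xi+i\eta,A+A^{-1})x\|^2\,d\eta\le\langle x,Q_1(\xi)x\rangle.
\]
One then feeds in Lemma~\ref{lem:Lyap} for $A$ (not $A_r$) and reads off \eqref{eq:Sr_conv} from Proposition~\ref{prop:decay_to_integral}. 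Your plan---insert the explicit formula for $R(\xi+i\eta,A_r)$, split into large/small roots, change variables $\eta\mapsto\im\mu_L$, and dominate $\|R(\mu_L,A)\,\cdot\,\|$ by $\|R(c\xi+i\nu,A)\,\cdot\,\|$ via the resolvent identity---is workable (your observations that $\re\mu_L\asymp\xi$ and that the small-root contribution is $O(\eta^{-4})$ in norm are correct), but it is considerably more laborious than the one-line Lyapunov comparison, and it requires care to keep the constants uniform in $\xi$.

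Where your proposal is shakier is the diagnosis of the threshold $\alpha=2$ and of the $\varepsilon$-loss. It is not that ``two powers of $(-A)^{-1}$'' are lost by detouring through $(-A)^{-\alpha-2}$; that detour belongs to Section~\ref{sec:Decay_CT}, not here. In the paper the threshold arises because the integral/Lyapunov criterion (Proposition~\ref{prop:decay_to_integral}, Lemma~\ref{lem:Lyap}) only sees exponents $\gamma\in(0,1/2]$, and one wants $\alpha\gamma$ to be a positive integer so that $D((-A)^{\alpha\gamma})=D((-A_r)^{\alpha\gamma})$ trivially. For $\alpha\ge2$ one takes $\gamma=1/\alpha$ and lands on $\alpha\gamma=1$; the case $\gamma=1/2$ ($\alpha=2$) forces the logarithm via Lemma~\ref{lem:Lyap}\ref{enu:poly_Lyap3}. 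For $0<\alpha<2$ no such $\gamma$ exists, and the paper instead proves a separate fractional-domain inclusion under bounded perturbations (Lemma~\ref{lem:A_AD_domain}): $D((-A_r)^{\alpha\gamma+\delta})\subset D((-A)^{\alpha\gamma})$ for any small $\delta>0$, and that $\delta$ is exactly the $\varepsilon$-loss after renormalising with Lemma~\ref{lem:frac_normalize1}. Your remark that ``$(-A_r)^{-\theta}=(-A)^{-\theta}B_r^{-\theta}$ need not hold'' correctly identifies the obstruction, but the resolution you sketch (go through integer powers or through $(-A)^{-\alpha-2}$) does not by itself produce the sharp statement in \eqref{eq:Sr_conv}; you would still need something like Lemma~\ref{lem:A_AD_domain} to close the case $\alpha<2$.
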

	
	We need three auxiliary results
	for the proof of Proposition~\ref{prop:perturbation}.
	First, we present a simple result on the resolvent set of $A+A^{-1}$.
	\begin{lemma}
		\label{lem:A_Ainv_resol}
		Let $A:D(A) \subset X \to X$ be a closed operator on 
		a Banach space $X$.
		If $i\mathbb{R} \subset \varrho(A)$, then
		$i\mathbb{R} \subset \varrho(A + A^{-1})$.
	\end{lemma}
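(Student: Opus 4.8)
The plan is to factor, for each fixed $\eta \in \mathbb{R}$, the operator $i\eta - (A + A^{-1})$ into a product of invertible operators, using the elementary observation that both preimages of $i\eta$ under the map $t \mapsto t + t^{-1}$ are purely imaginary.

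First I would note that $0 \in i\mathbb{R} \subset \varrho(A)$, so $A^{-1} \in \mathcal{L}(X)$ and $A + A^{-1}$ is a closed operator with domain $D(A)$. Fix $\eta \in \mathbb{R}$ and let $\mu_\pm := \tfrac{i}{2}\big(\eta \pm \sqrt{\eta^2 + 4}\,\big)$ be the two roots of $\mu^2 - i\eta\,\mu + 1 = 0$. These are nonzero, purely imaginary, and satisfy $\mu_+ + \mu_- = i\eta$ and $\mu_+ \mu_- = 1$; in particular $\mu_\pm \in i\mathbb{R} \subset \varrho(A)$. A short computation then gives the identity
\[
i\eta - (A + A^{-1}) = -(A - \mu_+)\, A^{-1}\, (A - \mu_-) \qquad\text{on } D(A),
\]
since for $x \in D(A)$ one has $A^{-1}(A - \mu_-)x = x - \mu_- A^{-1}x \in D(A)$, and applying $A - \mu_+$ and using $AA^{-1}x = x$, $\mu_+\mu_- = 1$, and $\mu_+ + \mu_- = i\eta$ reduces the right-hand side to $i\eta\, x - Ax - A^{-1}x$.

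Finally I would invert each factor: $A - \mu_\pm = -(\mu_\pm - A)$ is a bijection of $D(A)$ onto $X$ with inverse $-R(\mu_\pm, A)$, and $A^{-1}$ is a bijection of $X$ onto $D(A)$. Chaining the inverses in the correct order, $i\eta - (A + A^{-1})$ is a bijection of $D(A)$ onto $X$ with
\[
\big(i\eta - (A + A^{-1})\big)^{-1} = -R(\mu_-, A)\, A\, R(\mu_+, A),
\]
and this belongs to $\mathcal{L}(X)$ because $A R(\mu_+, A) = \mu_+ R(\mu_+, A) - I \in \mathcal{L}(X)$. Hence $i\eta \in \varrho(A + A^{-1})$ for every $\eta \in \mathbb{R}$, which is the claim. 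The only delicate point is the domain bookkeeping in the factorization — checking that each intermediate image lands in the domain of the next factor — so that the displayed composition is a genuine bounded operator on all of $X$; there is no serious obstacle beyond that, and the case $\eta = 0$ (where $\mu_\pm = \pm i$) is covered by the same argument.
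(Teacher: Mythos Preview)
Your proof is correct and follows essentially the same approach as the paper: both factor $i\eta - (A+A^{-1})$ via the two purely imaginary roots of $z^2 - i\eta z + 1 = 0$, observe these lie in $\varrho(A)$, and invert the factors. The only cosmetic differences are the placement of $A^{-1}$ in the factorization (you put it in the middle, the paper on the right) and your more explicit domain bookkeeping, which the paper omits.
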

	\begin{proof}
		Let $\omega \in \mathbb{R}$. A routine calculation shows that 
		\[
		i \omega - A-A^{-1} = 
		-(A- i\omega_1 I) (A- i\omega_2 I) A^{-1},
		\]
		where 
		\[
		\omega_1 := \frac{\omega}{2} + \sqrt{1+\frac{\omega^2}{4}},\qquad 
		\omega_2 := \frac{\omega}{2} - \sqrt{1+\frac{\omega^2}{4}}.
		\]
		From $i\mathbb{R} \subset \varrho(A)$, it follows that 
		$A- i\omega_1 I$ and $A- i\omega_2 I$ are invertible in $\mathcal{L}(X)$.
		Hence
		\[
		(i \omega - A-A^{-1})^{-1} 
		= -A (A- i\omega_1 I)^{-1}(A- i\omega_2 I)^{-1} \in \mathcal{L}(X).
		\]
		Thus, $i\mathbb{R} \subset \varrho(A + A^{-1})$.
		\hfill $\Box$	
	\end{proof}
	
	Second, we present a result on 
	the domain of fractional powers under bounded perturbations,
	which is used to prove \eqref{eq:Sr_conv} in the case $0<\alpha<2$.
	\begin{lemma}
		\label{lem:A_AD_domain}
		Let $X$ be a Banach space and $B \in \mathcal{L}(X)$. 
		Suppose that $A$ and $A+B$ generate 
		$C_0$-semigroups on $X$  with negative growth bounds. 
		Then the inclusion $D((-A)^\alpha) \subset D((-A-B)^{\beta})$
		holds for every $\alpha,\beta \in (0,1)$ with $\beta < \alpha$.
	\end{lemma}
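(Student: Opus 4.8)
The plan is to squeeze the fractional power domains between real interpolation spaces of the couple $(X,D(A))$, exploiting that $A$ and $A+B$ share the same domain. I would start with the standing observations. Since $A$ and $A+B$ have negative growth bounds, one has $\sigma(A),\sigma(A+B)\subset\mathbb{C}_-$ and $0\in\varrho(A)\cap\varrho(A+B)$; moreover $-A$ and $-(A+B)$, being negatives of generators of bounded $C_0$-semigroups, are sectorial of angle at most $\pi/2$ and hence \emph{invertible} sectorial operators. Consequently their fractional powers are well behaved, and one may invoke the interpolation description of fractional power domains (Komatsu's theorem; see, e.g., Section~6.6 of \cite{Haase2006}): for an invertible sectorial operator $C$ and $\theta\in(0,1)$,
\[
(X,D(C))_{\theta,1}\hookrightarrow D(C^\theta)\hookrightarrow (X,D(C))_{\theta,\infty},
\]
where $(\cdot,\cdot)_{\theta,q}$ denotes the real interpolation functor.

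Next I would use that $B\in\mathcal{L}(X)$ forces $D(A+B)=D(A)$ with equivalent graph norms, so that $(X,D(A))_{\theta,q}=(X,D(A+B))_{\theta,q}$ with equivalent norms for all $\theta\in(0,1)$ and $q\in[1,\infty]$, together with the standard ordering property of the real interpolation method, namely $(X,D(A))_{\alpha,\infty}\hookrightarrow (X,D(A))_{\beta,1}$ whenever $0<\beta<\alpha<1$ (a larger exponent dominates any trace-class index). Chaining these facts,
\[
D\big((-A)^\alpha\big)\hookrightarrow (X,D(A))_{\alpha,\infty}\hookrightarrow (X,D(A))_{\beta,1}=(X,D(A+B))_{\beta,1}\hookrightarrow D\big((-A-B)^\beta\big),
\]
which gives the asserted inclusion (in fact as a continuous embedding, although only the set inclusion is claimed).

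The step that needs care is checking that $-A$ and $-(A+B)$ are injective sectorial operators so that the fractional-power interpolation theorem is applicable; this is precisely where the hypothesis of negative growth bounds is used, since if $0\in\sigma(A)$ the powers $(-A)^\theta$ need not be injective and the clean interpolation picture fails. The necessity of the strict inequality $\beta<\alpha$ is equally transparent here: it is spent exactly in the ordering embedding $(X,D(A))_{\alpha,\infty}\hookrightarrow (X,D(A))_{\beta,1}$, which converts the trace index $\infty$ produced by the upper embedding into the index $1$ demanded by the lower embedding. A direct argument via the Balakrishnan integral representation of $(-A-B)^\beta$ and resolvent bounds is also possible, but the interpolation route is shorter and cleaner, so that is the one I would write out.
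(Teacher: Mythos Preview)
Your argument is correct, and it takes a genuinely different route from the paper. The paper works with the abstract H\"older spaces
\[
X_{\alpha}:=\Big\{x\in X:\lim_{\lambda\to\infty}\|\lambda^{\alpha}AR(\lambda,A)x\|=0\Big\},
\qquad
X_{\alpha}^{B}:=\Big\{x\in X:\lim_{\lambda\to\infty}\|\lambda^{\alpha}(A+B)R(\lambda,A+B)x\|=0\Big\},
\]
invokes the sandwich $X_{\alpha}\subset D((-A)^{\beta})\subset X_{\beta}$ from Engel--Nagel, and then proves $X_{\alpha}\subset X_{\alpha}^{B}$ by an explicit resolvent computation based on the identity $R(\lambda,A+B)=R(\lambda,A)+R(\lambda,A)BR(\lambda,A+B)$. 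The point requiring work there is that the spaces $X_{\alpha}$ and $X_{\alpha}^{B}$ are defined through the operators $A$ and $A+B$ themselves, not merely through their domains, so the inclusion $X_{\alpha}\subset X_{\alpha}^{B}$ is not automatic.

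Your approach replaces this with the real interpolation functor and Komatsu's embedding $(X,D(C))_{\theta,1}\hookrightarrow D(C^{\theta})\hookrightarrow (X,D(C))_{\theta,\infty}$. The gain is that $(X,D(A))_{\theta,q}$ depends only on the Banach couple $(X,D(A))$, so the identification $(X,D(A))_{\theta,q}=(X,D(A+B))_{\theta,q}$ is immediate from $D(A)=D(A+B)$ with equivalent graph norms, and no resolvent calculation is needed; the strict inequality $\beta<\alpha$ is then spent exactly where you say, in the ordering embedding $(X,D(A))_{\alpha,\infty}\hookrightarrow(X,D(A))_{\beta,1}$. In exchange, your argument imports somewhat heavier machinery (the Komatsu theorem and standard facts on real interpolation), whereas the paper's proof is self-contained once one accepts Proposition~II.5.33 of Engel--Nagel. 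Both proofs use the negative-growth-bound hypothesis only to ensure that $-A$ and $-(A+B)$ are invertible sectorial, so that fractional powers and the relevant embeddings are available.
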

	\begin{proof}
		For $\alpha \in (0,1)$,
		define the abstract H\"older spaces of order $\alpha$ by
		\begin{align*}
		X_{\alpha} &:= 
		\left\{
		x \in X: \lim_{\lambda \to \infty} \|\lambda^\alpha AR(\lambda,A) x\| = 0
		\right\} \\
		X_{\alpha}^B &:= 
		\left\{
		x \in X: \lim_{\lambda \to \infty} \|\lambda^\alpha (A+B)R(\lambda,A+B) x\| = 0
		\right\}.	
		\end{align*}
		Since $A$ and $A+B$ generate 
		$C_0$-semigroups  with negative growth bounds,
		it follows from Proposition~II.5.33 of \cite{Engel2000} that
		\begin{equation}
		\label{eq:AH_domain}
		X_{\alpha} \subset D\big((-A)^\beta\big) \subset X_{\beta},\quad 
		X_{\alpha}^B \subset D\big((-A-B)^\beta\big) \subset X_{\beta}^B
		\end{equation}
		for  
		$0 <\beta < \alpha< 1$.
		It suffices to show that 
		$X_{\alpha} \subset X_{\alpha}^B$ for every $\alpha \in (0,1)$.  In fact, 
		combining this inclusion 
		with \eqref{eq:AH_domain}, we obtain
		\[
		D\big((-A)^\alpha\big) \subset X_{\alpha} \subset X_{\alpha}^B \subset D\big((-A-B)^{\beta }\big)
		\]
		for $0 < \beta<\alpha<1$.
		
		To obtain $X_{\alpha} \subset X_{\alpha}^B$, we use 
		the identity
		\[
		R(\lambda,A+B) = R(\lambda,A) + R(\lambda,A)BR(\lambda,A+B)\qquad 
		\forall \lambda >0.
		\]
		This yields
		\begin{align}
		(A+B) R(\lambda,A+B) &= 
		AR(\lambda,A) + BR(\lambda,A) \notag \\ &\qquad 
		+ (A+B) R(\lambda,A) BR(\lambda,A+B)
		\label{eq:resol_decom}
		\end{align}
		for every $\lambda >0$.
		There exists $M>0$ such that 
		\[
		\|\lambda R(\lambda,A)\| \leq M,\quad 
		\|\lambda R(\lambda,A+B)\| \leq M
		\qquad \forall \lambda > 0,
		\]
		and hence
		\begin{align*}
		\|(A+B)R(\lambda,A)\| &\leq \|-I + \lambda R(\lambda,A)\|  + 
		\|BR(\lambda,A)\| \\
		&\leq 1+M + \frac{M\|B\|}{\lambda}\qquad \forall \lambda >0.
		\end{align*}
		Consequently,
		\begin{align*}
		&\lim_{\lambda \to \infty}
		\|\lambda^{\alpha }BR(\lambda,A) \| 
		= 0\\
		&\lim_{\lambda \to \infty}
		\|\lambda^{\alpha }(A+B) R(\lambda,A) BR(\lambda,A+B)\| 
		= 0
		\end{align*}
		for every $\alpha \in (0,1)$.
		It follows from \eqref{eq:resol_decom} that 
		$X_\alpha \subset X_{\alpha}^B$.
		\hfill $\Box$	
	\end{proof}
	
	Third,
	Lyapunov equations
	for $A$ are connected to those for $A+A^{-1}$ by
	the following result obtained in the proof of \cite[Lemma~2.6]{Guo2006}.
	\begin{lemma}
		\label{lem:Q1_Lyap}
		Let 
		$A$ be the generator of a uniformly bounded $C_0$-semigroup  on a Hilbert space $H$. 
		Suppose that $0 \in \varrho(A)$, and take $\kappa > \|A^{-1}\|^2$.
		For every $\xi >0$,
		the self-adjoint solution $Q(\xi)\in \mathcal{L}(H)$ of the Lyapunov equation \eqref{eq:Lyap}
		satisfies
		\[
		(A+A^{-1}-\xi I)^* Q\left( \frac {\xi}{1+\kappa} \right) + Q\left( \frac {\xi}{1+\kappa} \right) (A+A^{-1}-\xi I)  \leq 
		-I \quad \text{on $D(A)$}.
		\]
	\end{lemma}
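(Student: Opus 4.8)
The plan is to read the claimed operator inequality as the sesquilinear-form inequality
\[
2\re\langle (A+A^{-1}-\xi I)x, Q(\sigma)x\rangle \le -\|x\|^2\qquad \text{for all } x \in D(A),
\]
where $\sigma := \xi/(1+\kappa)$; this is what the notation ``$\le -I$ on $D(A)$'' means, since $A^{-1} \in \mathcal{L}(H)$ forces $D(A+A^{-1}) = D(A)$. The decisive elementary observation is that $\xi - \sigma = \kappa\sigma$, which lets me split
\[
A + A^{-1} - \xi I = (A - \sigma I) + (A^{-1} - \kappa\sigma I).
\]
Applying the Lyapunov equation \eqref{eq:Lyap} at parameter $\sigma$ with $x_1 = x_2 = x$ handles the first summand exactly: $2\re\langle (A-\sigma I)x, Q(\sigma)x\rangle = -\|x\|^2$. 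Since $Q(\sigma) \ge 0$ by the representation \eqref{eq:Q_def}, the scalar $\langle x, Q(\sigma)x\rangle$ is real and nonnegative, and the whole problem reduces to controlling the cross term $2\re\langle A^{-1}x, Q(\sigma)x\rangle$ against $2\kappa\sigma\langle x, Q(\sigma)x\rangle$.

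For the cross term I would invoke the Lyapunov equation \eqref{eq:Lyap} a second time, now at the vector $A^{-1}x$, which lies in $\ran(A^{-1}) = D(A)$: with $x_1 = x_2 = A^{-1}x$ and parameter $\sigma$, using $(A-\sigma I)A^{-1}x = x - \sigma A^{-1}x$ together with the self-adjointness of $Q(\sigma)$, one rearranges to
\[
2\re\langle A^{-1}x, Q(\sigma)x\rangle = 2\sigma\langle A^{-1}x, Q(\sigma)A^{-1}x\rangle - \|A^{-1}x\|^2.
\]
Next I would estimate $\langle A^{-1}x, Q(\sigma)A^{-1}x\rangle$ by exploiting that $-A^{-1} = R(0,A)$ commutes with every $T(t)$, so $\|T(t)A^{-1}x\| \le \|A^{-1}\|\,\|T(t)x\|$; from \eqref{eq:Q_def},
\begin{align*}
\langle A^{-1}x, Q(\sigma)A^{-1}x\rangle &= \int_0^\infty e^{-2\sigma t}\|T(t)A^{-1}x\|^2\,dt \\
&\le \|A^{-1}\|^2\int_0^\infty e^{-2\sigma t}\|T(t)x\|^2\,dt = \|A^{-1}\|^2\langle x, Q(\sigma)x\rangle,
\end{align*}
and here $\|A^{-1}\|^2 \le \kappa$ by hypothesis.

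Assembling the pieces, $2\re\langle (A+A^{-1}-\xi I)x, Q(\sigma)x\rangle = -\|x\|^2 + 2\re\langle A^{-1}x, Q(\sigma)x\rangle - 2\kappa\sigma\langle x, Q(\sigma)x\rangle$; inserting the cross-term identity and then the bound $\langle A^{-1}x, Q(\sigma)A^{-1}x\rangle \le \kappa\langle x, Q(\sigma)x\rangle$ makes the two $\langle x, Q(\sigma)x\rangle$-contributions cancel, leaving $2\re\langle (A+A^{-1}-\xi I)x, Q(\sigma)x\rangle \le -\|x\|^2 - \|A^{-1}x\|^2 \le -\|x\|^2$, which is exactly the assertion. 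I do not expect a genuine obstacle: the argument is driven entirely by the two structural moves — the splitting dictated by $\xi - \sigma = \kappa\sigma$ and the reuse of the Lyapunov identity at $A^{-1}x$ — after which the commutation $T(t)A^{-1} = A^{-1}T(t)$ and monotonicity of the defining integral close the proof. The only points needing a word of care are that $A^{-1}x \in D(A)$ and that the form inequality is read on $D(A)$, both immediate from the boundedness of $A^{-1}$.
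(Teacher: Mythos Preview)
Your proof is correct and follows essentially the same argument as the one in the proof of \cite[Lemma~2.6]{Guo2006} that the paper invokes: the splitting $A+A^{-1}-\xi I=(A-\sigma I)+(A^{-1}-\kappa\sigma I)$ via $\xi-\sigma=\kappa\sigma$, the second application of the Lyapunov identity at $A^{-1}x\in D(A)$, and the commutation $T(t)A^{-1}=A^{-1}T(t)$ to bound $\langle A^{-1}x,Q(\sigma)A^{-1}x\rangle\le\|A^{-1}\|^2\langle x,Q(\sigma)x\rangle$ are exactly the structural moves used there.
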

	
	We are now ready to prove Proposition~\ref{prop:perturbation}.
	\begin{proof}[of Proposition~\ref{prop:perturbation}]
		Since
		\[
		A+rA^{-1} = r^{1/2} 
		\Big(
		\big( r^{-1/2}A \big) + 	\big( r^{-1/2}A \big) ^{-1}
		\Big)\qquad \forall r > 0,
		\]
		it suffices to prove the case $r = 1$.
		By Lemma~\ref{lem:perturb_bounded}, $(S_1(t))_{t\geq 0}$ is uniformly bounded.
		We see from Lemma~\ref{lem:A_Ainv_resol} that 
		$i\mathbb{R} \subset \varrho(A + A^{-1})$.
		
		It remains to prove the norm estimate 
		\eqref{eq:Sr_conv} for $r=1$.
		For $\xi >0$,
		let $Q(\xi)\in \mathcal{L}(H)$ be the self-adjoint solution of the Lyapunov equation \eqref{eq:Lyap}.
		Take $\kappa > \|A^{-1}\|^2$ and define
		\[
		Q_1(\xi) := Q\left( \frac {\xi}{1+\kappa} \right),\quad \xi >0.
		\]
		By Lemma~\ref{lem:Q1_Lyap},
		$Q_1(\xi)$ satisfies
		
		\[
		(A+A^{-1}-\xi I)^* Q_1(\xi) + Q_1(\xi) (A+A^{-1}-\xi I)  \leq 
		-I \quad \text{on $D(A)$}
		\]
		for every $\xi >0$.	
		Therefore,
		\begin{equation}
		\label{eq:resol_int_bound}
		\frac{1}{2\pi} 
		\int_{-\infty}^{\infty} \|R(\xi +i\eta, A+A^{-1}) x\|^2 d\eta \leq 
		\langle
		x, Q_1(\xi)x 
		\rangle
		\end{equation}
		for every $x \in H$ and every $\xi >0$; see Remark~2.3 of \cite{Guo2006}. 
		For $\gamma \in (0,1/2]$,
		define
		\[
		h_\gamma(\xi) := 
		\begin{cases}
		\xi^{1-2\gamma} & \text{if $0<\gamma < 1/2$} \\
		\frac{1}{\log(1/\xi)} & \text{if $\gamma = 1/2$}.
		\end{cases}
		\]
		Then \ref{enu:poly_Lyap1}  and \ref{enu:poly_Lyap2} of 
		Lemma~\ref{lem:Lyap}
		show that 
		\begin{align}
		\lim_{\xi \to 0+} h_{\gamma}(\xi)\langle x, Q_1(\xi) x\rangle 
		&= 0.\label{eq:Q1_bound}
		\end{align}
		for every $x \in D((-A)^{\alpha \gamma })$.
		
		First we consider the case $\alpha \geq 2$.
		Define $\gamma := 1/\alpha $. Then $\gamma \in (0,1/2]$
		and $\alpha \gamma = 1$. Since
		$D((-A)^{\alpha\gamma}) = D((-A-A^{-1})^{\alpha \gamma})$,
		it follows from \eqref{eq:resol_int_bound} and \eqref{eq:Q1_bound} that
		\begin{equation}
		\label{eq:resolvent_perturb}
		\lim_{\xi \to 0+} h_{\gamma}(\xi)
		\int_{-\infty}^{\infty} \|R(\xi +i\eta, A+A^{-1}) x\|^2 d\eta = 0
		\end{equation}
		for every $x \in D((-A-A^{-1})^{\alpha\gamma})$.
		Hence \eqref{eq:Sr_conv} holds in the case $\alpha \geq 2$ by
		Proposition~\ref{prop:decay_to_integral}.\ref{enu:polystable1} and
		\ref{enu:polystable3}.
		
		Next we investigate the case $0<\alpha < 2$.
		Choose $\gamma \in (0,1/2)$ and 
		$\delta \in (0,1-\alpha \gamma)$ arbitrarily.
		For every $\alpha_1,\xi_1>0$, 
		\begin{align*}
		D\big(
		(-A)^{\alpha_1}
		\big) &= D\big(
		(-A+\xi_1I)^{\alpha_1}
		\big) \\
		D\big(
		(-A-A^{-1})^{\alpha_1}
		\big) &= D\big(
		(-A-A^{-1} + \xi_1I)^{\alpha_1}
		\big)
		\end{align*}
		by Proposition~3.1.9.a) of \cite{Haase2006}.
		Since $A-\xi_1I$ and $A+A^{-1} - \xi_1I$ generate
		$C_0$-semigroups with negative growth bounds
		for every $\xi_1 >0$,
		it follows from
		Lemma~\ref{lem:A_AD_domain} that
		\begin{align*}
		D\big((-A-A^{-1})^{\alpha \gamma + \delta}\big) 
		\subset D\big((-A)^{\alpha \gamma }\big).
		\end{align*} 
		Therefore,  \eqref{eq:resol_int_bound} and \eqref{eq:Q1_bound} lead to
		\begin{equation*}
		\lim_{\xi \to 0+} \xi^{1-2\gamma}
		\int_{-\infty}^{\infty} \|R(\xi +i\eta, A+A^{-1}) x\|^2 d\eta = 0
		\end{equation*}
		for every $x \in D((-A-A^{-1})^{\alpha\gamma+ \delta})$.
		From Proposition~\ref{prop:decay_to_integral}.\ref{enu:polystable1},
		we have that
		\[
		\|
		T(t)(-A-A^{-1})^{-\alpha - \delta / \gamma}  
		\| = O\left(
		\frac{1}{t} 
		\right)\qquad t\to \infty.
		\]
		Since $\gamma \in (0,1/2)$ and 
		$\delta  \in (0,1-\alpha \gamma)$ were arbitrary,
		it follows from Lemma~\ref{lem:frac_normalize1} that
		\eqref{eq:Sr_conv} holds in the case $0<\alpha < 2$.
		\hfill $\Box$	
	\end{proof}

\end{document}